\theoremstyle{plain}
\newtheorem{theorem}{Theorem}[section]
\newtheorem{lemma}[theorem]{Lemma}
\newtheorem{proposition}[theorem]{Proposition}
\theoremstyle{definition}
\theoremstyle{remark}
\newtheorem{remark}[theorem]{Remark}
\renewcommand{\indent}{\hspace*{5mm}}
\newcommand{\cK}{{\cal K}}
\newcommand{\br}{\gamma}  
\newcommand{\uc}{\psi^{U}}  
\newcommand{\lc}{\psi^{L}}  
\newcommand{\cps}{{\cal K}} 
\newcommand{\mcps}{{\cal L}} 
\newcommand{\bss}{{\cal Q}} 
\newcommand{\bssd}{{\cal M}} 
\newcommand{\dynp}{{\cal N}} 
\begin{document}
	
\title{Erd\H{o}s-Feller-Kolmogorov-Petrowsky law of the iterated logarithm for
self-normalized martingales: \\ a game-theoretic approach}

\author{
	Takeyuki Sasai\thanks{Graduate School of Information Science and Technology, University of Tokyo}, \ 
	Kenshi Miyabe\thanks{School of Science and Technology, Meiji University} \ 
	and Akimichi Takemura\footnotemark[1]\ %
}
\date{April, 2015}
\maketitle

\begin{abstract}
We prove an Erd\H{o}s--Feller--Kolmogorov--Petrowsky law of the iterated logarithm for self-normalized
martingales. Our proof is given in the framework of the
game-theoretic probability of Shafer and Vovk. As many other game-theoretic proofs, our
proof is self-contained and explicit.
\end{abstract}

\noindent
{\it Keywords and phrases:} \ 
Bayesian strategy, 
constant-proportion betting strategy,
lower class,
upper class, 
self-normalized processes.

\section{Main Result}
Let $S_n$ be a martingale with respect to a filtration $\{{\cal F}_n\}_{n=0}^\infty$ and
let $x_n=S_n - S_{n-1}$ be the martingale difference. On some regularity conditions
on the growth of $|x_n|$, various versions of the law of the iterated logarithm (LIL) have been given in literature. In particular the 
Erd\H{o}s--Feller--Kolmogorov--Petrowsky law of the iterated logarithm (EFKP-LIL \cite[Chapter 5.2]{Revesz2013Random}) is an important extension of LIL.
Erd\H{o}s \cite{Erdos1942Law} proved EFKP-LIL for symmetric Bernoulli random variables.
EFKP-LIL has been generalized by Feller \cite{Feller1943General} for bounded and independent random variables and \cite{Feller1946Law} (see also Bai \cite{Bai1989Theorem}) for the i.i.d.\ case.
Further, EFKP-LIL has been generalized for martingales by Strassen \cite{Strassen1967Almost}, Jain, Jogdeo and Stout \cite{JainJogdeoStout1975Upper}, Philipp and Stout \cite{PhilippStout1986Invariance}, Einmahl and Mason \cite{EinmahlMason1990Some} and Berkes, H{\"o}rmann and Weber \cite{BerkesHormannWeber2010Upper}. 
In particular, Einmahl and Mason \cite{EinmahlMason1990Some} proved a  martingale analogue 
of Feller's result in \cite{Feller1943General}, 
just as Stout \cite{Stout1970Martingale} obtained a martingale analogue of Kolmogorov's result in \cite{Kolmogoroff1929Uber}.

For self-normalized processes, EFKP-LIL was derived by \cite{GriffinKuelbs1991Some,CsorgoSzyszkowiczWang2003Darling} in the i.i.d.\ case.
However EFKP-LIL has not been derived in the martingale case, even though de la Pe{\~n}a, Klass and Lai \cite{PenaKlassLai2004Self} obtained the usual LIL. The purpose of this paper is to prove 
EFKP-LIL for self-normalized martingales.
For a positive non-decreasing continuous function $\psi(\lambda)$ let
\begin{align}
\label{eq:Ipsi}
I(\psi):=\int_1^\infty \psi(\lambda) e^{-\psi(\lambda)^2/2} \frac{d\lambda}{\lambda}.
\end{align}
We state our main theorem.

\begin{theorem}
	\label{th:m-self-normalized-efkp-lil}
	Let $S_n,\,n=1,2,\ldots,$ be a martingale with $S_0 = 0$ and $x_n = S_n -S_{n-1}$ be a martingale difference with respect to a filtration $\{\mathcal{F}_{n}\}_{n=0}^{\infty}$ such that 
	\begin{align*}
	|x_n| \le c_n\,\,a.s.
	\end{align*}
	for some $\mathcal{F}_{n-1}$-measurable random variable $c_n$. Let 
\[
A_n^2 := \sum_{i=1}^n x_i^2 \ \ge 0
\]
and let $\psi$ be a positive non-decreasing continuous function. 

If $I(\psi)< \infty$, then
\begin{equation}
{\mathrm{P} }\left( S_n < A_n \psi(A_n^2) \ a.a. \mid \lim A_n = \infty, \limsup c_n \frac{\psi(A_n^2)^3}{A_n} < \infty \right) = 1.
\label{eq:measure-validity}
\end{equation}
If $I(\psi)= \infty$, then
\begin{equation}
{\mathrm{P} }\left( S_n \ge A_n \psi(A_n^2) \ i.o. \mid \lim A_n = \infty, \limsup c_n \frac{\psi(A_n^2)^3}{A_n} < \infty \right) = 1.
\label{eq:measure-sharpness}
\end{equation}
\end{theorem}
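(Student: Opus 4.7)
The plan is to work in the Shafer--Vovk game-theoretic probability framework, in which Skeptic at each round $n$ announces a bet and Reality returns $x_n$ subject to $|x_n|\le c_n$; both conclusions will be established by exhibiting explicit nonnegative capital processes of Skeptic that force the respective events. The single workhorse is the one-parameter family of constant-proportion strategies
\[
K_n^{\theta}:=\prod_{i=1}^{n}(1+\theta x_i),
\]
which for $|\theta|c_n<1$ is a nonnegative martingale in the game-theoretic sense. Using $|x_i|\le c_i$ together with $\log(1+u)=u-u^2/2+O(|u|^3)$, one obtains the approximation
\[
\log K_n^{\theta}=\theta S_n-\tfrac{\theta^2}{2}A_n^2+R_n(\theta),\qquad |R_n(\theta)|\le C\theta^3\sum_{i\le n}c_i\,x_i^2,
\]
and the growth condition $\limsup c_n\psi(A_n^2)^3/A_n<\infty$ is precisely what makes $R_n(\theta)$ negligible when $\theta$ has order $\psi(A_n^2)/A_n$.

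For the upper-class half $I(\psi)<\infty$, I would partition the $A_n^2$-axis into geometric blocks $[\br^k,\br^{k+1})$ (with $\br>1$ to be taken close to $1$), and on block $k$ use the constant fraction $\theta_k:=\psi(\br^k)/\sqrt{\br^k}$, committed at the predictable time $\tau_k:=\inf\{n:A_n^2\ge \br^k\}$. Form the Bayesian mixture
\[
\cK_n:=\sum_{k\ge 1}w_k\,K_n^{\theta_k}
\]
with weights $w_k$ tailored so that $\sum_k w_k$ is comparable to $I(\psi)$ (via the change of variable $\lambda=\br^k$ in the integrand of \eqref{eq:Ipsi}) and so that $w_k K_n^{\theta_k}\ge c>0$ whenever $S_n\ge A_n\psi(A_n^2)$ with $A_n^2\in[\br^k,\br^{k+1})$. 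If the bad event in \eqref{eq:measure-validity} occurred infinitely often, distinct $k$ would each contribute and $\cK_n$ would be unbounded; since $\cK_n$ is a nonnegative martingale of finite initial cost, it is a.s.\ bounded under the conditioning event, forcing \eqref{eq:measure-validity}.

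For the lower-class half $I(\psi)=\infty$ the strategy is a game-theoretic second Borel--Cantelli. On the excursion $(\tau_k,\tau_{k+1}]$ define the event $E_k:=\{S_{\tau_{k+1}}-S_{\tau_k}\ge \sqrt{\br^{k+1}}\,\psi(\br^{k+1})+|S_{\tau_k}|\}$; one must establish a game-theoretic conditional lower probability of $E_k$ of order $\psi(\br^k)e^{-\psi(\br^k)^2/2}$. This is a Paley--Zygmund-type lower tail bound that I would derive by exhibiting a Skeptic capital process opposing $E_k^{\,c}$ whose gain is bounded on the complement, using again the exponential inequality above but with a fraction of the opposite sign. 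The divergence of $I(\psi)$ makes the sum of these lower bounds infinite, whereupon the standard game-theoretic Borel--Cantelli lemma (an infinite weighted sum of mirrored constant-proportion strategies) forces infinitely many $E_k$, and hence \eqref{eq:measure-sharpness}.

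The principal obstacle is the self-normalization: $A_n^2$ is random, so the ``optimal'' betting fraction $\psi(A_n^2)/A_n$ cannot be chosen in advance, and the block-wise commitment to $\theta_k$ introduces both a mismatch error (the fraction is slightly off the instantaneous optimum throughout the block) and the cubic remainder $R_n(\theta_k)$ accumulated over a quadratic-variation length of order $\br^k(\br-1)$. The hypothesis $\limsup c_n\psi(A_n^2)^3/A_n<\infty$ is exactly tuned so that both errors are $o(\psi(\br^k)^2)$ after letting $\br\downarrow 1$, preserving the decisive exponential factor $e^{-\psi(\br^k)^2/2}$ in both halves of the proof and making the initial-cost sum match the integrand of $I(\psi)$. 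This is the martingale counterpart of the Feller growth restriction in \cite{Feller1943General} and is the main point where self-normalization causes genuine new difficulties over the classical i.i.d.\ setting.
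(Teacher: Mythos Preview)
Your broad architecture matches the paper's---mixtures of constant-proportion strategies for validity, and a cycle-by-cycle accumulation (game-theoretic Borel--Cantelli) for sharpness---but both halves have a genuine gap at the decisive step.

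\medskip
\noindent\textbf{Validity.} The inference ``distinct $k$ would each contribute and $\mathcal{K}_n$ would be unbounded'' is not valid. At any fixed time $n$, only the terms $w_kK_n^{\theta_k}$ with $\theta_k$ near $\psi(A_n^2)/A_n$ are large, and these give $\mathcal{K}_n\ge c$ for a \emph{bounded} constant $c$; the fact that infinitely many different blocks $k$ are visited along the path does not accumulate into unboundedness of the capital at any single time. The paper closes this by inserting an extra slowly diverging sequence $a_k\uparrow\infty$ into the weights (which is possible precisely because $\sum_k\psi(k)k^{-1}e^{-\psi(k)^2/2}<\infty$ leaves room), so that the lower bound at a time with $A_n^2\approx k$ becomes $c\cdot a_k\to\infty$. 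Without this trick your mixture is merely bounded below on the bad event, which proves nothing.

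\medskip
\noindent\textbf{Sharpness.} Here the proposal is schematic exactly where the real difficulty lies. To implement a game-theoretic second Borel--Cantelli, Skeptic must effectively \emph{sell} a process that is large on $E_k^c$ while guaranteeing his total capital stays nonnegative for \emph{every} move of Reality in the cycle, including $S_n$ shooting far above $A_n\psi(A_n^2)$. A single exponential $K_n^{\theta}$ (with either sign of $\theta$) is unbounded above and cannot be safely shorted; your ``fraction of the opposite sign'' does not provide the needed cap. The paper's key device is a \emph{buy-two, sell-one} combination
\[
\mathcal{Q}_n^{\gamma}\;=\;2\,\mathcal{L}_n^{\gamma}-\mathcal{K}_n^{\gamma e},\qquad
\mathcal{L}_n^{\gamma}=\int_{2/e}^{1}\prod_{i\le n}(1+u\gamma x_i)\,du,
\]
whose point is that the sold component $\mathcal{K}_n^{\gamma e}$ dominates the bought mixture when $S_n\ge e\gamma A_n^2$, while the mixture itself is bounded when $S_n\le \gamma A_n^2/e$; hence $\mathcal{Q}_n^{\gamma}$ is uniformly bounded above by a constant depending only on $\gamma^3A_n^2\bar c_n$ (Lemma~\ref{lem:bss-bound}). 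Only after this cap is in place can one form the shorted process $\mathcal{N}_n^{\gamma_k,D}=\alpha+\tfrac{1}{D}p_k(\alpha-\mathcal{M}^{\gamma_k,k}_{n-\tau_k})$ and prove it stays $\ge\alpha/2$ throughout the cycle while growing by the factor $1+\tfrac{1-\delta}{D}p_k$ when $S_n$ never crosses $A_n\psi(A_n^2)$. A second subtlety you do not address is that $A_n^2$ varies by a large factor within a cycle (the paper takes $n_k=k^{5k}$, not a geometric scale), so a single $\gamma$ cannot serve for the whole cycle; the paper therefore uses a further discrete average over $\gamma_k e^{-w}$, $w=1,\dots,\lceil\ln k\rceil$, with sequential freezing at stopping times $\tau_{k,w}$. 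Without these two ingredients your Borel--Cantelli strategy cannot be made into a nonnegative capital process.
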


This theorem is a self-normalization of the result in Einmahl and Mason \cite{EinmahlMason1990Some}
and  a generalization of the result in de la Pe{\~n}a, Klass and Lai \cite{PenaKlassLai2004Self}.
The order of growth $A_n/(\psi(A_n^2))^3$ for $c_n$ is currently the best known order for EFKP-LIL
even in the independent case (\cite{BerkesHormannWeber2010Upper}).
We call \eqref{eq:measure-validity} the {\em validity}  and
\eqref{eq:measure-sharpness} the {\em sharpness} of EFKP-LIL.

In \eqref{eq:measure-validity} and \eqref{eq:measure-sharpness}, we 
are not assuming that the conditioning events happen with probability one.  We can state
\eqref{eq:measure-validity} equivalently as
\begin{equation}
{\mathrm{P} }\left( \lim A_n = \infty, \limsup c_n \frac{\psi(A_n^2)^3}{A_n} < \infty,
 S_n \ge  A_n \psi(A_n^2) \ i.o. \right) = 0.
\label{eq:measure-validity1}
\end{equation}
For our proof we adopt the framework of game-theoretic probability by Shafer and Vovk 
\cite{ShaferVovk2001Probability}.  In a game-theoretic approach, for proving  
\eqref{eq:measure-validity}, we 
explicitly construct a non-negative martingale diverging to infinity on the event of 
\eqref{eq:measure-validity1}.

We use the following notation throughout the paper
\begin{align*}
\ln_k n := \underbrace{\ln \ln \dots \ln}_{k \text{times}} n.
\end{align*}
We also fix a small positive $\delta$ for the rest of this paper, e.g., $\delta=0.01$.
For our proof, as is often seen in the upper-lower class theory (cf.\ Feller \cite[Lemma 1]{Feller1946Law}),
we can restrict our attention to $\psi$ such that
\begin{align}
	\label{eq:uc0}
	\lc(n)\le\psi(n)\le\uc(n)\mbox{ for all sufficiently large }n,
\end{align}
where
\[
\lc(n):=\sqrt{2 \ln_2 n + 3 \ln_3 n}, \quad
\uc(n):=\sqrt{2 \ln_2 n + 4 \ln_3 n}.
\]
Here $L$ means the lower class and $U$ means the upper class.
It can be verified that $I(\uc)<\infty$ and $I(\lc)=\infty$.

The rest of this paper is organized as follows.
In Section \ref{sec:gtp} we give a game-theoretic statement  corresponding to our main theorem.
In Section \ref{sec:validity} we give a proof of the validity and
in Section \ref{sec:sharpness} we give a proof of the sharpness.

\section{Preliminaries on Game-Theoretic Probability}
\label{sec:gtp}
In order to state a game-theoretic version of Theorem \ref{th:m-self-normalized-efkp-lil}, 
consider the following simplified predictably unbounded forecasting game (SPUFG, Section 5.1 of \cite{ShaferVovk2001Probability}) with the initial capital $\alpha>0$.
\begin{quote}
	{\sc Simplified Predictably Unbounded Forecasting Game}\\
	\textbf{Players}: Forecaster, Skeptic, Reality\\
	\textbf{Protocol}:\\
	\indent $\cps_0:=\alpha$.\\
	\indent FOR $n=1,2,\ldots$:\\
	\indent\indent Forecaster announces $c_n \ge 0$.\\
	\indent\indent Skeptic announces $M_n\in\mathbb{R}$.\\
	\indent\indent Reality announces $x_n\in[-c_n,c_n]$.\\
	\indent\indent $\cps_n:=\cps_{n-1}+M_n x_n$.\\
	\textbf{Collateral Duties}:
	Skeptic must keep $\cps_n$ non-negative.
	Reality must keep $\cps_n$ from tending to infinity.
\end{quote}
Usually $\alpha$ is taken to be 1, but in Section \ref{sec:sharpness}  we use $\alpha\neq 1$ for notational simplicity.

We prove the following theorem, which implies Theorem \ref{th:m-self-normalized-efkp-lil}
by Chapter 8 of \cite{ShaferVovk2001Probability}.
\begin{theorem}
	\label{th:self-normalized-efkp-lil} Consider SPUFG.
	Let $\psi$ be a positive non-decreasing continuous function. 
	If $I(\psi)<\infty$, Skeptic can force
	\begin{align}
	A_n^2 \rightarrow \infty \,\, \text{and}\,\, \limsup c_n \frac{\psi(A_n^2)^3}{A_n} <\infty
	\ \Rightarrow \ S_n < A_n\psi(A_n^2) \ \  a.a.  
        \label{eq:validity1}
	\end{align}
	and if $I(\psi)=\infty$, Skeptic can force
	\begin{align}
	A_n^2 \rightarrow \infty \,\, \text{and} \,\, &\limsup c_n \frac{\psi(A_n^2)^3}{A_n} <\infty
	\ \Rightarrow \  S_n \ge A_n\psi(A_n^2) \ \  i.o.		
	\label{eq:sharpness1}
	\end{align}
\end{theorem}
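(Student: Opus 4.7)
The plan is to prove \eqref{eq:validity1} and \eqref{eq:sharpness1} in two separate steps, both built on a Bayesian mixture of constant-proportion betting strategies. Before anything I would apply the standard upper-lower class reduction: replace $\psi$ by its truncation to the band $[\lc,\uc]$ in \eqref{eq:uc0}, since outside this band the event in question is dominated (in one direction or the other) by the usual Kolmogorov-type LIL, which is available in the self-normalized martingale setting from \cite{PenaKlassLai2004Self}, and this change does not affect the integral $I(\psi)$. The workhorse is the constant-proportion betting strategy $M_n=\lambda\cps_{n-1}$, which produces the multiplicative capital $\cps_n=\cps_0\prod_{i=1}^n(1+\lambda x_i)$. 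Applying $\log(1+u)\ge u-u^2/2-|u|^3$ for $|u|\le 1/2$ yields the key estimate
\[
\log(\cps_n/\cps_0)\;\ge\;\lambda S_n-\tfrac{\lambda^2}{2}A_n^2-\lambda^3\sum_{i=1}^n c_i x_i^2.
\]
Under $\limsup c_n\psi(A_n^2)^3/A_n<\infty$ and for $\lambda$ of order $\psi(A_n^2)/A_n$, the cubic error is $O(1)$, since $\lambda^3\max_{i\le n}c_i\cdot A_n^2\lesssim (\psi/A_n)^3\cdot(A_n/\psi^3)\cdot A_n^2=O(1)$.

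For \eqref{eq:validity1} I would partition time by the hitting times $\tau_k:=\inf\{n:A_n^2>\br^k\}$ for a fixed $\br>1$. On each block $[\tau_k,\tau_{k+1})$ run the constant-proportion strategy with $\lambda_k:=\psi(\br^k)/\sqrt{\br^k}$ and initial capital $w_k\propto \psi(\br^k)\,e^{-\psi(\br^k)^2/2}$, so that $\sum_k w_k<\infty$ is exactly the Riemann-sum incarnation of $I(\psi)<\infty$. Whenever $S_n\ge A_n\psi(A_n^2)$ occurs somewhere in block $k$, the estimate above forces the end-of-block capital to be bounded below by a universal positive constant. The superposition of these per-block strategies is a legitimate Skeptic strategy, and infinitely many crossings then force its total capital to infinity, yielding \eqref{eq:validity1}.

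For \eqref{eq:sharpness1} Skeptic must force capital divergence on the event that the boundary is not crossed from some point on. On each block $[\tau_k,\tau_{k+1})$ I would combine a truncated version of the constant-proportion strategy with an appropriate (game-theoretic) stopping rule, so that conditional on $S_n<A_n\psi(A_n^2)$ throughout the block, the block strategy's multiplicative gain is at least $1+\delta_k$ with $\delta_k$ of order $\psi(\br^k)\,e^{-\psi(\br^k)^2/2}$, i.e.\ the Gaussian tail associated with the boundary at scale $\br^k$. Because $\sum_k \delta_k=\infty$ is exactly the divergence of $I(\psi)$, the product $\prod_k(1+\delta_k)$ diverges and so does Skeptic's capital. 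The parameter $\br$ has to be chosen large enough that each block carries enough increment (in the $A_n^2$-clock) to realize the Gaussian lower tail estimate, yet small enough that the approximation $\lambda_k\approx\psi(A_n^2)/A_n$ is uniform within the block.

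The main obstacle is the sharpness half. The validity argument only needs the cubic Taylor remainder to be small along sample paths where a crossing occurs, but the sharpness argument demands a uniform within-block lower bound on Skeptic's per-block gain of the correct exponential order $e^{-\psi(\br^k)^2/2}$; in particular one has to produce sharp (not merely crude) lower-tail estimates for a game-theoretic supermartingale over a random interval determined by the $A_n^2$-clock. This is exactly where the third-order condition $\limsup c_n\,\psi(A_n^2)^3/A_n<\infty$ is essential: any weaker control on $c_n$ would inflate the cubic remainder, destroy the delicate balance with the Riemann sum for $I(\psi)$, and yield only a suboptimal sharpness statement.
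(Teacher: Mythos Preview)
Your validity outline has the right ingredients but a logical gap in the conclusion. If each crossing only makes the relevant account's capital exceed a \emph{universal positive constant}, then superposing with summable weights $w_k$ cannot force divergence: the total is dominated by $\sum_k w_k<\infty$. What actually happens (and what the paper exploits) is that at a crossing with $A_n^2\approx k$, the $k$-th account---run from time $0$, not merely on the block---has log-capital $\approx\psi(k)^2/2$, so its weighted contribution is $\approx\psi(k)\to\infty$. The paper sharpens this by inserting an auxiliary diverging factor $a_k\uparrow\infty$ into $p_k$ while keeping $\sum_k p_k<\infty$, and by summing over the window $k\in[\lfloor A_n^2-A_n^2/\psi(A_n^2)\rfloor,\lfloor A_n^2\rfloor]$ rather than picking a single geometric index. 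Also note that if you literally start the $k$-th strategy at $\tau_k$, the exponent contains $S_n-S_{\tau_k}$ rather than $S_n$, and the bound you wrote does not give what you need; the accounts must run from the origin with the freezing rule \eqref{eq:stop}.

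The sharpness half has a more serious gap: the sentence ``truncated constant-proportion with an appropriate stopping rule gives multiplicative gain $1+\delta_k$'' elides the entire construction. Skeptic must \emph{sell} a process that would explode at a crossing, yet keep his capital nonnegative on every path; the paper does this via the buy--sell combination $\bss_n^\br=2\mcps_n^\br-\cps_n^{\br e}$ of \eqref{eq:bssn-def}, where $\mcps_n^\br$ is a uniform mixture over proportions in $[2/e,1]\cdot\br$. Lemma~\ref{lem:bss-bound} shows $\bss_n^\br$ is bounded by a constant outside a narrow window for $S_n$, which is precisely what lets the cycle process $\dynp_n^{\br_k,D}$ stay $\ge\alpha/2$ throughout (Proposition~\ref{prop:dynp}) while gaining $1+\tfrac{1-\delta}{D}\lceil\ln k\rceil\psi(n_{k+1})e^{-\psi(n_{k+1})^2/2}$ when no crossing occurs. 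Your proposal supplies none of this structure. In addition, a fixed geometric scale $\br^k$ is not what the paper uses and appears inadequate: the cycles are $n_k=k^{5k}$, so $n_{k+1}/n_k\sim k^5\to\infty$. This super-geometric growth makes $A_{\tau_k}^2/A_t^2\cdot\psi(n_{k+1})^2\to0$ within a cycle, which is exactly what the case~(b) bound \eqref{eq:case-b-constant-bound} requires; with a constant ratio $\br$ that quantity is bounded away from zero and the estimate fails. Finally, you do not mention the reduction to fixed $C$ (Proposition~\ref{th:self-normalized-efkp-lil-dash}) or the aborting stopping times $\sigma_{k,C},\nu_k$ that guarantee nonnegativity on paths outside $\Omega_C$.
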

We use the same line of arguments as in  \cite{MiyabeTakemura2013Law} and Chapter
5 of Shafer and Vovk \cite{ShaferVovk2001Probability}.
We employ a Bayesian mixture of constant-proportion betting strategies.
Here we give basic properties of constant-proportion betting strategies.

A constant-proportion betting strategy with betting proportion $\br>0$ sets
\begin{align*}
M_n = \br \cps_{n-1}.
\end{align*}
However, $\cps_n$ becomes negative if $\br x_n< -1$.
For simplicity we consider applying the strategy (``keep the account open'')
as long as $\br c_n \le \delta$ and sets $M_n=0$ once $\br c_n > \delta$ happens (``freeze the account'').
Define a stopping time
\begin{align}
\label{eq:stop}
\sigma_{\br} := \min\{n \mid \br c_n  >\delta\}.
\end{align}
Note the monotonicity of $\sigma_{\br}$, i.e., $\sigma_{\br'}\ge \sigma_{\br}$ if $\br' \le \br$.
We denote the capital process of the constant-proportion betting strategy with this stopping time by $\cps^\br_n$. 
With the initial capital of $\cps^\br_0 = \alpha$, the value of $\cps^\br_n$ is  written as
\begin{align*}
\label{eq:sncps}
\cps^\br_n = \alpha \prod_{i=1}^{\min(n,\sigma_{\br}-1)} (1+\br x_i).
\end{align*}

By 
\[
t - \frac{t^2}{2} - t^2 \times |t|
\le 
\ln(1+t)
\le
t - \frac{t^2}{2} + t^2 \times |t|
\]
for $|t| \le \delta$, taking the logarithm of $\prod_{i=1}^n (1+\br x_i)$, for $n< \sigma_\br$, we have 
\begin{equation*}
\br S_n - \frac{\br^2 A_n^2}{2} - \br^3 A_n^2  \bar c_n
\le
\ln \left(\cps_n^\br /\alpha\right)
\le 
\br S_n - \frac{\br^2 A_n^2}{2} + \br^3 A_n^2 \bar c_n
\end{equation*}
and
\begin{equation}
\label{eq:cp-bound}
e^{- \br^3 A_n^2  \bar c_n}  e^{\br S_n - \br^2 A_n^2/2} 
\le 
\cps^\br_n / \alpha 
\le
e^{\br^3 A_n^2  \bar c_n }  e^{\br S_n - \br^2 A_n^2/2},
\end{equation}
where 
\[
\bar c_n := \max_{1\le i \le n} c_i.
\]

We also set up some notation for expressing the condition in \eqref{eq:validity1} and \eqref{eq:sharpness1}.
An infinite sequence of  Forecaster's and Reality's announces $\omega = (c_1,x_1,c_2,x_2,\ldots)$ is called a \textit{path} and the set of paths $\Omega=\{\omega\}$ is called the sample space.
Define a subset $\Omega_{<\infty}$ of $\Omega$ as
\begin{equation*}
\label{eq:sample-space-0}
\Omega_{<\infty} := \left\{ \omega \mid A_n^2 \rightarrow \infty, \limsup_n c_n \frac{\psi(A_n^2)^3}{A_n} < \infty \right\}.
\end{equation*}
For an arbitrary path $\omega \in \Omega_{<\infty}$ we have
\begin{align}
	\label{eq:sn-forecaster-v}
	\exists C(\omega) < \infty,\exists n_1(\omega),\forall n>n_1(\omega),\, c_n<C(\omega)\frac{A_n}{\psi(A_n^2)^3}, \ \psi(A_n^2)\ge 1.
\end{align}
The last inequality holds by the lower bound in \eqref{eq:uc0}.

\section{Validity}
\label{sec:validity}
We prove the validity in  \eqref{eq:validity1} of 
Theorem \ref{th:self-normalized-efkp-lil}.
In this section we let $\alpha=1$.
We discretize the integral in \eqref{eq:Ipsi}
as
\begin{align}
	\label{eq:val-suml-cond}
	\sum_{k=1}^\infty \frac{ \psi(k)}{k} e^{-\psi(k)^2/2} < \infty.
\end{align}
Since $xe^{-x^2/2}$ is decreasing for $x\ge1$,
the function $\lambda\mapsto\frac{\psi(\lambda)}{\lambda}e^{-\psi(\lambda)^2/2}$ is decreasing for $\lambda$ such that $\psi(\lambda)\ge 1$ 
and 
convergences of the integral in \eqref{eq:Ipsi}
and the sum in \eqref{eq:val-suml-cond} are equivalent.


The convergence of the infinite series in \eqref{eq:val-suml-cond} implies the existence of a non-decreasing sequence of positive reals $a_k$ diverging to infinity ($a_k\uparrow \infty$), such that
the series multiplied term by term by $a_k$ is still convergent:
\begin{align*}
	Z:=\sum_{k=1}^\infty a_k\frac{ \psi(k)}{k} e^{-\psi(k)^2/2}  < \infty.
\end{align*}
This is easily seen by dividing the infinite series into blocks of sums less than
or equal to $1/2^k$ and multiplying the $k$-th block by $k$
(see also \cite[Lemma 4.15]{MiyabeTakemura2012Convergence}).


For $k\ge 1$ let 
\[
p_k := \frac{1}{Z}a_k \frac{\psi(k)}{k} e^{-\psi(k)^2/2} 
\]
and consider the capital process of a countable mixture of constant-proportion strategies 
\begin{align}
	\label{eq:validity-br}
	\cK_n := \sum_{k=1}^\infty p_k \cps_n^{\br_k},
	\quad\mbox{ where }\quad
	\br_k := \frac{\psi(k)}{\sqrt{k}}. 
\end{align}
Note that $\cK_n$ is never negative.
By the upper bound in \eqref{eq:uc0}, as $k\rightarrow\infty$ we have
\begin{equation}
\label{eq:gamma-k-zero}
\br_k \le \frac{\uc(k)}{\sqrt{k}} = \sqrt{\frac{2 \ln_2 k + 4 \ln_3 k}{k}} \rightarrow 0. 
\end{equation}

We show that $\limsup_n \cK_n=\infty$  if a path $\omega\in \Omega_{<\infty}$ satisfies
$S_n \ge A_n\psi(A_n^2)\,\,i.o.$ \ 
We bound $Z \cK_n$ as
\begin{equation}
\label{eq:zk-1}
Z\cK_n \ge \sum_{k=\lfloor A^2_n-A^2_n/\psi(A^2_n)\rfloor}^{\lfloor A^2_n \rfloor} p_k \cK_n^{\br_k}
.
\end{equation}
At this point we check that  all accounts on  
the right-hand side of \eqref{eq:zk-1} are open for sufficiently large $n$  and 
the lower bound in 
\eqref{eq:cp-bound} can be applied to each term of \eqref{eq:zk-1} for $\omega\in \Omega_{<\infty}$.
We have the following two lemmas.

\begin{lemma}
\label{lem:bar_c_n_bound}
Let $\omega\in \Omega_{<\infty}$. Let $C=C(\omega)$ in \eqref{eq:sn-forecaster-v}.
For sufficiently large $n$ 
\begin{equation}
\label{eq:bar_c_n_bound}
\bar c_n = \max_{1\le i \le n} c_i < (1+\delta) C  \frac{A_n}{\psi(A_n^2)^3}.
\end{equation} 
\end{lemma}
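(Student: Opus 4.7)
The idea is to split $\bar c_n = \max_{i \le n} c_i$ at the threshold $n_1=n_1(\omega)$ of \eqref{eq:sn-forecaster-v}: for $i \le n_1$ one has the path-dependent bound $c_i \le K := \max_{j \le n_1} c_j < \infty$, while for $n_1 < i \le n$ the inequality $c_i < C A_i/\psi(A_i^2)^3$ is available. My plan is to bound the small-index contribution by $(\delta/2)\,C A_n/\psi(A_n^2)^3$ and the large-index contribution by $(1+\delta/2)\,C A_n/\psi(A_n^2)^3$, so that their maximum is at most $(1+\delta)\,C A_n/\psi(A_n^2)^3$.

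For the small-index part, the upper bound $\psi \le \uc$ from \eqref{eq:uc0} gives $\psi(A_n^2)^3 = O((\ln_2 A_n^2)^{3/2})$, so combined with $A_n \to \infty$ we have $A_n/\psi(A_n^2)^3 \to \infty$, and therefore $K < (\delta/2)\,C A_n/\psi(A_n^2)^3$ for all sufficiently large $n$.

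For the large-index part, set $G(\lambda) := \sqrt{\lambda}/\psi(\lambda)^3$ so that $A_i/\psi(A_i^2)^3 = G(A_i^2)$. I will show that $G$ is asymptotically non-decreasing: for every $\epsilon > 0$ there is $\lambda_0$ with $G(\lambda) \le (1+\epsilon)\,G(\lambda')$ whenever $\lambda_0 \le \lambda \le \lambda'$. Using both inequalities of \eqref{eq:uc0},
\[
\frac{G(\lambda)}{G(\lambda')} = \sqrt{\lambda/\lambda'}\left(\frac{\psi(\lambda')}{\psi(\lambda)}\right)^3 \le \sqrt{\lambda/\lambda'}\left(\frac{\uc(\lambda')}{\lc(\lambda)}\right)^3.
\]
Regarded as a function of $\lambda' \ge \lambda$ (with $\lambda$ fixed and large), the logarithmic derivative of the right-hand side equals $-1/(2\lambda') + 3\uc'(\lambda')/\uc(\lambda')$, and since $\uc'/\uc = O(1/(\lambda' \ln \lambda'\, \ln_2 \lambda'))$ is negligible against $1/(2\lambda')$, the expression is decreasing in $\lambda'$. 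Hence its supremum on $\lambda' \ge \lambda$ is attained at $\lambda' = \lambda$ and equals $(\uc(\lambda)/\lc(\lambda))^3$, which tends to $1$ as $\lambda \to \infty$ by the asymptotic equivalence of $\uc$ and $\lc$. Taking $\epsilon := \delta/2$ and (by enlarging $n_1$ if necessary, which preserves \eqref{eq:sn-forecaster-v}) assuming $A_{n_1+1}^2 \ge \lambda_0$, we conclude $\max_{n_1 < i \le n} A_i/\psi(A_i^2)^3 \le (1+\delta/2)\,A_n/\psi(A_n^2)^3$.

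Combining the two bounds yields $\bar c_n < (1+\delta)\,C A_n/\psi(A_n^2)^3$ for all sufficiently large $n$. The main step is establishing the asymptotic monotonicity of $G$; this is precisely where the slow iterated-logarithmic growth rates encoded in \eqref{eq:uc0} are essential, as they make the polynomial factor $\sqrt{\lambda/\lambda'}$ dominate the slowly growing ratio $\uc(\lambda')^3/\lc(\lambda)^3$.
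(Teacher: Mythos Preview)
Your proof is correct. Both you and the paper begin identically, dispatching the first $n_1(\omega)$ indices via $A_n/\psi(A_n^2)^3 \to \infty$; the difference lies in how the tail $i>n_1$ is handled. The paper introduces a second threshold, splitting according to whether $A_l \le A_n/\psi(A_n^2)^3$ or $A_l > A_n/\psi(A_n^2)^3$: in the former case it crudely bounds $c_l \le C A_l$ (using $\psi(A_l^2)\ge 1$), and in the latter it compares $\psi(A_l^2)$ to $\psi(A_n^2)$ at the single worst-case argument $A_n^2/\psi(A_n^2)^6$, invoking \eqref{eq:uc0} to see that the ratio tends to $1$. You instead prove a uniform statement, namely that $G(\lambda)=\sqrt{\lambda}/\psi(\lambda)^3$ is asymptotically non-decreasing, via the logarithmic-derivative bound $\uc'/\uc = O(1/(\lambda\ln\lambda\,\ln_2\lambda)) \ll 1/(2\lambda)$. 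Your route is a bit more systematic (one case instead of two, and it yields a reusable monotonicity fact), while the paper's route is more elementary (no calculus, just evaluating $\psi$ at a well-chosen point). Both rest on exactly the same phenomenon: the $\sqrt{\lambda}$ factor dominates the $(\ln_2\lambda)^{3/2}$-type growth coming from $\psi^3$.
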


\begin{proof}
Note that the first $n_1(\omega)$ $c$'s i.e., $c_1, \dots, c_{n_1(\omega)}$,
do not matter since  $\lim_{n\rightarrow\infty} A_n/\psi(A_n^2)^3=\infty$.
For $l >  n_1(\omega)$, by \eqref{eq:sn-forecaster-v} we have
\[
c_l \le C \frac{A_l}{\psi(A_l^2)^3} \le C A_l.
\]
Hence $c_l$ such that 
$A_l \le  A_n/{\psi(A_n^2)^3}$ do not matter in $\bar c_n$.

For $c_l$ such that $A_l >  A_n/{\psi(A_n^2)^3}$ we have
\[
c_l \le C \frac{A_l}{\psi\big(A_n^2/\psi(A_n^2)^6\big)^3} \le
C \frac{A_n}{\psi\big(A_n^2/\psi(A_n^2)^6\big)^3} 
= C \frac{A_n}{\psi(A_n^2)^3} \frac{\psi(A_n^2)^3}{\psi\big(A_n^2/\psi(A_n^2)^6\big)^3}.
\]
But by \eqref{eq:uc0}, 
both $\psi(A_n^2)$ and 
$\psi\big(A_n^2/\psi(A_n^2)^6\big)$ are of the order 
$\sqrt{2 \ln_2 A_n^2}(1+o(1))$ and 
$\psi(A_n^2)/\psi\big(A_n^2/\psi(A_n^2)^6\big) \rightarrow 1$ as $n\rightarrow\infty$.
Hence \eqref{eq:bar_c_n_bound} holds.
\end{proof}

\begin{lemma}
\label{lem:keep-open}
Let $\omega\in \Omega_{<\infty}$. 
For sufficiently large $n$, $\sigma_{\br_k} > n$ for all 
$k=\lfloor A^2_n-A^2_n/\psi(A^2_n)\rfloor, \dots, \lfloor A^2_n \rfloor$.
\end{lemma}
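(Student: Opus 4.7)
The plan is to reduce the claim to a uniform bound on $\gamma_k \bar c_n$ for $k$ in the stated range and then combine the near-constancy of $\gamma_k = \psi(k)/\sqrt{k}$ on this range with the bound on $\bar c_n$ from Lemma \ref{lem:bar_c_n_bound}.

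First, I would observe that by the definition \eqref{eq:stop} of $\sigma_\gamma$ and the monotonicity noted there, the condition $\sigma_{\gamma_k}>n$ is equivalent to $\gamma_k c_i\le\delta$ for all $i\le n$, i.e.\ $\gamma_k\bar c_n\le\delta$. So it suffices to show that
\[
\max\bigl\{\gamma_k\bar c_n : \lfloor A_n^2-A_n^2/\psi(A_n^2)\rfloor\le k\le\lfloor A_n^2\rfloor\bigr\}\longrightarrow 0
\]
as $n\to\infty$ along any $\omega\in\Omega_{<\infty}$.

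Next, for $k$ in this range I would bound $\gamma_k$ from above. Since $\psi$ is non-decreasing, $\psi(k)\le\psi(\lfloor A_n^2\rfloor)\le\psi(A_n^2)$. For the denominator, the smallest admissible $k$ satisfies
\[
k\ge A_n^2\Bigl(1-\tfrac{1}{\psi(A_n^2)}\Bigr)-1,
\]
and because $\psi(A_n^2)\to\infty$ on $\Omega_{<\infty}$ (by the lower bound in \eqref{eq:uc0} and $A_n^2\to\infty$), we get $\sqrt{k}\ge A_n(1+o(1))$ uniformly in $k$ from the range. Hence, for all sufficiently large $n$,
\[
\gamma_k=\frac{\psi(k)}{\sqrt{k}}\le (1+o(1))\frac{\psi(A_n^2)}{A_n}\le 2\,\frac{\psi(A_n^2)}{A_n}.
\]

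Finally I would feed in Lemma \ref{lem:bar_c_n_bound}: $\bar c_n<(1+\delta)C\,A_n/\psi(A_n^2)^3$ for $n$ large. Multiplying the two bounds gives
\[
\gamma_k\bar c_n\le 2\,\frac{\psi(A_n^2)}{A_n}\cdot(1+\delta)C\,\frac{A_n}{\psi(A_n^2)^3}=\frac{2(1+\delta)C}{\psi(A_n^2)^2}\longrightarrow 0,
\]
uniformly over the stated range of $k$. In particular this expression is below $\delta$ for all sufficiently large $n$, which yields $\sigma_{\gamma_k}>n$ as required. The only step requiring a little care is the uniform lower bound $\sqrt{k}\ge(1+o(1))A_n$ on the range; everything else is then a direct multiplication of the two estimates.
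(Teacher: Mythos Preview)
Your proof is correct and essentially mirrors the paper's: both bound $\gamma_k$ over the range by $\psi(A_n^2)/\sqrt{\lfloor A_n^2-A_n^2/\psi(A_n^2)\rfloor}$ (your $(1+o(1))\psi(A_n^2)/A_n$ is the same quantity), multiply by the bound on $\bar c_n$ from Lemma~\ref{lem:bar_c_n_bound}, and observe that the product is $O(1/\psi(A_n^2)^2)\to 0$. The only cosmetic difference is that the paper invokes the monotonicity of $\sigma_\gamma$ to reduce to a single worst-case $\gamma$, while you bound $\gamma_k\bar c_n$ uniformly in $k$ directly.
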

\begin{proof}
By the monotonicity of $\psi$,  we have $\br_k \le  \psi(A_n^2)/\sqrt{\lfloor A^2_n-A^2_n/\psi(A^2_n)\rfloor}$ 
for $k=\lfloor A^2_n-A^2_n/\psi(A^2_n)\rfloor, \dots, \lfloor A^2_n \rfloor$.
Then by the monotonicity of $\sigma_\br$,
it suffices to show 
\[
\frac{\psi(A_n^2)}{\sqrt{\lfloor A^2_n-A^2_n/\psi(A^2_n)\rfloor}}
\bar c_n \le \delta
\]
for sufficiently large $n$. By \eqref{eq:bar_c_n_bound}, the left-hand side is bounded from above by
\[
\frac{\psi(A_n^2)}{\sqrt{\lfloor A^2_n-A^2_n/\psi(A^2_n)\rfloor}}
\times (1+\delta) C  \frac{A_n}{\psi(A_n^2)^3} = 
(1+\delta) C \frac{A_n}{\sqrt{\lfloor A^2_n-A^2_n/\psi(A^2_n)\rfloor}}
\frac{1}{\psi(A_n^2)^2}.
\]
But this converges to 0 as $n\rightarrow\infty$.
\end{proof}


By Lemma \ref{lem:keep-open} and the lower bound in 
\eqref{eq:cp-bound},  for sufficiently large $n$, we have
\begin{align*}
	\cps^{\br_{k}}_n  
	\ge
	e^{-\br_{k}^3 A_n^2 \bar c_n}  e^{\br_k S_n - \br_{k}^2 A_n^2/2},\quad k=\lfloor A_n^2-A_n^2/\psi(A_n^2)\rfloor,\dots, \lfloor A_n^2\rfloor
\end{align*}
and $Z \cK_n$ can be evaluated from below as
\allowdisplaybreaks
\begin{align*}
	Z \cK_n 
	&\ge
	Z\sum_{k=\lfloor A^2_n-A^2_n/\psi(A^2_n)\rfloor}^{\lfloor A^2_n \rfloor} p_k \exp( \br_k S_n  - \frac{\br_k^2A_n^2}{2} - \br_k^3 A_n^2 \bar c_n )\\
	&= 
	\sum_{k=\lfloor A^2_n-A^2_n/\psi(A^2_n)\rfloor}^{\lfloor A^2_n \rfloor}  a_k \frac{\psi(k)}{k} \exp(-\frac{\psi(k)^2}{2} +  \br_k S_n - \frac{\br_k^2A_n^2}{2} - \br_k^3 A_n^2 \bar c_n )
\end{align*}

Now we assume that $S_n \ge A_n \psi(A_n^2)\ i.o.$ for the path $\omega\in \Omega_{<\infty}$. 
Then for sufficiently large $n$ such that $S_n \ge A_n \psi(A_n^2)$, 
 $\psi(A_n^2)/(\psi(A_n^2) -1) \le 1+ \delta$ and $A_n/\left(\lfloor A_n^2-A_n^2/\psi(A_n^2)\rfloor \right)^{1/2} \le 1+ \delta$, 
we evaluate the exponent part by \eqref{eq:cp-bound} as
\begin{align*}
	-\frac{\psi(k)^2}{2}+ \br_k S_n - \frac{\br_k^2 A_n^2}{2}
	&\ge
	-\frac{\psi(k)^2}{2}+A_n\psi(A_n^2)\frac{\psi(k)}{\sqrt{k}}-\frac{\psi(k)^2}{k}\frac{A_n^2}{2}\\
	&=
	\psi(k)\left(-\frac{1}{2}\left(1+\frac{A_n^2}{k}\right)\psi(k)+\sqrt{\frac{A_n^2}{k}}\psi(A_n^2)\right)\\
	&\ge
	-\frac{\psi(A_n^2)^2}{2}\left(\sqrt{\frac{A_n^2}{k}}-1\right)^2\ge-\frac{\psi(A_n^2)^2}{2}\left(\frac{A_n^2}{k}-1\right)^2 \\
	&\ge
	-\frac{1}{2}\left(\frac{\psi(A_n^2)}{\psi(A_n^2)-1}\right)^2\ge-\frac{1}{2}-2\delta
\end{align*}
and by Lemma \ref{lem:bar_c_n_bound}
\begin{align}
	\br_k^3 A_n^2 \bar c_n 	&\le
	\frac{\psi(A_n^2)^3}{\left(\lfloor A_n^2-A_n^2/\psi(A_n^2)\rfloor \right)^{3/2}}  A_n^2 (1+\delta) C\frac{A_n }{\psi(A_n^2)^3} \nonumber  \\
	&\le
	(1+\delta)C 
        \left(\frac{A_n}{\left(\lfloor A_n^2-A_n^2/\psi(A_n^2)\rfloor \right)^{1/2}} \right) ^3
        \nonumber \\
	&\le
	C(1+ \delta)^4.
	\label{eq:validity-deltaconst}
\end{align}

For sufficiently large $n$, we have
\begin{align*}
	\psi(A_n^2)\le\uc(A_n^2)<\uc(2k)=\sqrt{2\ln_2 2k+4\ln_3 2k}
	<2\sqrt{2\ln_2 k+3\ln_2 k}=2\lc(k)\le2\psi(k).
\end{align*}
Thus by \eqref{eq:validity-deltaconst},
\begin{align*}
	Z \cK_n 
	&\ge 
	\sum_{k=\left\lfloor A_n^2-A_n^2/\psi(A_n^2) \right\rfloor}^{\lfloor A^2_n \rfloor } a_k \frac{\psi(k)}{k}\exp\left(-\frac{1}{2}-2\delta - C(1+\delta)^4 \right)\\
	&\ge
	a_{\left\lfloor A_n^2-A_n^2/\psi(A_n^2) \right\rfloor} \frac{\psi(A_n^2)}{2A^2_n} \sum_{k=\lfloor A_n^2-A_n^2/\psi(A_n^2) \rfloor}^{\lfloor A^2_n \rfloor} \exp \left(-\frac{1}{2}-2\delta - C(1+\delta)^4 \right)\\
	&\ge
	a_{\left\lfloor A_n^2-A_n^2/\psi(A_n^2) \right\rfloor} \frac{\psi(A_n^2)}{2A^2_n} \left(\frac{A^2_n}{\psi(A_n^2)}-1\right)\exp\left(-\frac{1}{2}-2\delta - C(1+\delta)^4 \right)\\
	&= 
	a_{\left\lfloor A_n^2-A_n^2/\psi(A_n^2) \right\rfloor} \left(\frac{1}{2}-\frac{\psi(A_n^2)}{2A^2_n}\right)\exp \left(-\frac{1}{2}-2\delta - C(1+\delta)^4 \right).
\end{align*}
Since $a_{\left\lfloor A_n^2-A_n^2/\psi(A_n^2) \right\rfloor}\rightarrow\infty$ as $n \rightarrow \infty$, we have shown
\[
\omega\in \Omega_{<\infty}, \ S_n \ge A_n\psi(A_n^2)  \ i.o.\ \Rightarrow \ \limsup_{n\rightarrow\infty} \cK_n = \infty.
\]

\section{Sharpness}
\label{sec:sharpness}
We prove the sharpness in \eqref{eq:sharpness1} of Theorem \ref{th:self-normalized-efkp-lil}.
As in Section 4.2 of \cite{ShaferVovk2001Probability}
and in \cite{MiyabeTakemura2012Convergence}, in order to prove the sharpness, 
it suffices to show the following proposition.
\begin{proposition}
	\label{th:self-normalized-efkp-lil-dash}  Consider SPUFG.
	Let $\psi$ be a positive non-decreasing continuous function.  If $I(\psi)=\infty$, then for each $C>0$,
Skeptic can force
	\begin{align}
	\label{eq:sn-forecaster-s}
A_n^2 \rightarrow \infty, \limsup_n c_n \frac{\psi(A_n^2)^3}{A_n} \le C  \ \Rightarrow \ S_n \ge A_n\psi(A_n^2) \ \  i.o.	\end{align}
      \end{proposition}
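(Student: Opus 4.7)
The plan is to construct a non-negative Skeptic capital process, with finite initial capital, whose value diverges to infinity on every path in the ``bad event'' $\{A_n^2\to\infty,\ \limsup c_n\psi(A_n^2)^3/A_n\le C,\ S_n<A_n\psi(A_n^2)\text{ a.a.}\}$. This is a game-theoretic analogue of the second Borel--Cantelli argument, dual to the validity construction of Section \ref{sec:validity}: rather than a sum of constant-proportion strategies indexed by integers that profits on upcrossings of the barrier $A\psi(A^2)$, we build one that extracts a profit on each block in which the barrier is not reached.

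Concretely, the first step is to use $I(\psi)=\infty$ to select a coarser grid $\lambda_0<\lambda_1<\cdots\uparrow\infty$, with $\lambda_k/\lambda_{k-1}\to 1$ (for instance with $\lambda_k-\lambda_{k-1}$ of order $\lambda_k/\psi(\lambda_k)^2$), preserving the divergence of the discretised series $\sum_k \frac{\lambda_k-\lambda_{k-1}}{\lambda_k}\psi(\lambda_k)e^{-\psi(\lambda_k)^2/2}$. Define the stopping times $\tau_k:=\inf\{n:A_n^2\ge \lambda_k\}$; these are finite on the hypothesised event, and the overshoot $A_{\tau_k}^2-\lambda_k$ is negligible uniformly in $k$ by an argument in the spirit of Lemma \ref{lem:bar_c_n_bound}. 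On each block $(\tau_{k-1},\tau_k]$ I would install a Skeptic sub-strategy --- built from the constant-proportion primitive $\cps^{\br_k}$ of Section \ref{sec:gtp} with $\br_k\approx\psi(\lambda_k)/\sqrt{\lambda_k}$ --- designed so that its value at $\tau_k$ is multiplied by at least $1+\eta_k p_k$ on the event $E_k^c:=\{S_n<A_n\psi(A_n^2)\text{ throughout block }k\}$, where $p_k$ is (a lower bound for) the game-theoretic lower price of $E_k$ and $\eta_k$ is the fraction of Skeptic's current capital invested on that block. Composing these sub-strategies sequentially yields a valid Skeptic capital process whose value at $\tau_k$, on the bad event, exceeds $\prod_{j\le k}(1+\eta_j p_j)$, which tends to infinity by a suitable choice of $\eta_j$ together with the divergence $\sum_k p_k=\infty$ inherited from $I(\psi)=\infty$.

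The main obstacle will be obtaining the explicit lower bound on the block lower price $p_k$ by constructing an explicit capital process from $\cps^{\br_k}$. Whereas constant-proportion bets with positive $\br_k$ pay off on large upward deviations of $S$, here we need the reverse payoff --- a barrier-replicating strategy that wins when $S$ stays below the moving barrier $A\psi(A^2)$ --- so the sub-strategy is essentially a digital ``no-touch'' option synthesised from constant-proportion primitives, and recovering the exponent $\psi(\lambda_k)^2/2$ with the right constant is delicate. A secondary difficulty is the bookkeeping needed to ensure that the composed sub-strategies form a legitimate non-negative Skeptic capital and that every relevant account remains open on all sufficiently large blocks; here the hypothesis $\limsup c_n\psi(A_n^2)^3/A_n\le C$ enters in the same manner as in Lemmas \ref{lem:bar_c_n_bound}--\ref{lem:keep-open}, controlling the error terms $\bar c_n$ and the freezing thresholds $\sigma_{\br_k}$ uniformly across blocks.
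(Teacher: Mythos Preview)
Your high-level structure --- partition time into blocks via stopping times $\tau_k$, install on each block a sub-strategy that multiplies Skeptic's capital by at least $1+\eta_k p_k$ whenever the barrier is not touched, and conclude from $\sum_k p_k=\infty$ --- is exactly the shape of the paper's argument. But two of your concrete choices would prevent the argument from closing.

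First, your grid is in the wrong regime. With $\lambda_k/\lambda_{k-1}\to 1$ (e.g.\ $\lambda_k-\lambda_{k-1}\sim\lambda_k/\psi(\lambda_k)^2$) the within-block increment $S_{\tau_k}-S_{\tau_{k-1}}$ has self-normalised scale $\sqrt{\lambda_k-\lambda_{k-1}}\sim\sqrt{\lambda_k}/\psi(\lambda_k)$, which is tiny compared with the possible range of the entry value $|S_{\tau_{k-1}}|\lesssim\sqrt{\lambda_{k-1}}\,\psi(\lambda_{k-1})$. Whether the barrier $A_n\psi(A_n^2)$ is touched in block $k$ is then governed almost entirely by where $S$ enters the block, not by the block's own moves, so no sub-strategy built from within-block bets can yield a \emph{uniform} multiplicative gain on $E_k^c$ irrespective of $S_{\tau_{k-1}}$. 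This is the game-theoretic face of the ``approximate independence'' requirement in the measure-theoretic second Borel--Cantelli argument, and it forces the opposite choice: the paper takes the superexponential grid $n_k=k^{5k}$, so that $n_{k+1}/n_k\asymp k^5\to\infty$ and the cycle increments dominate the past. The change of variable $\lambda=e^{5k\ln k}$ turns $I(\psi)=\infty$ into $\sum_k(\ln k)\,\psi(n_{k+1})e^{-\psi(n_{k+1})^2/2}=\infty$, and the extra $\ln k$ is precisely the per-cycle gain the construction produces.

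Second, you leave the ``no-touch'' sub-strategy unspecified beyond calling it the main obstacle; this is in fact the main content of the proof. Constant-proportion strategies $\cps^\br$ pay off on \emph{up}-crossings, so to profit when $S$ stays \emph{below} the barrier Skeptic must \emph{sell} something. The paper's device is $\bss^\br_n:=2\mcps^\br_n-\cps^{\br e}_n$, where $\mcps^\br_n=\int_{2/e}^1\prod_i(1+u\br x_i)\,du$ is a uniform mixture; the sold unit $\cps^{\br e}$ overtakes $2\mcps^\br$ for large $S_n$, so $\bss^\br_n$ is bounded above by a constant both for small $S_n$ and for large $S_n$, and can be large only in a narrow corridor $S_n\in(\br A_n^2/e,\,e\br A_n^2)$. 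A discrete mixture $\bssd^{\br_k,k}$ of $\bss^{\br_k e^{-w}}$ over $w=1,\dots,\lceil\ln k\rceil$, with sequential freezing at levels $\tau_{k,w}$, spreads this corridor across the whole cycle. Skeptic then holds $\dynp^{\br_k,D}_n=\alpha+\text{const}\cdot(\alpha-\bssd^{\br_k,k}_{n-\tau_k})$: if the barrier is never hit in the cycle, the coarse grid forces $\bssd\to 0$ at cycle's end and Skeptic's factor is at least $1+\frac{1-\delta}{D}\lceil\ln k\rceil\,\psi(n_{k+1})e^{-\psi(n_{k+1})^2/2}$; if it is hit (or if $c_n$ overshoots), the corridor bound keeps $\dynp\ge\alpha/2$. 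None of this buy--sell mechanism, nor the sequential freezing needed because $\bar c_n$ grows within a cycle, appears in your outline, and without it the proposal is a restatement of the goal rather than a route to it.
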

Once we prove this proposition, 
we can take the mixture over $C=1,2,\dots$. Then the sharpness follows,
because for each $\omega\in \Omega_{<\infty}$, there exists $C(\omega)$ satisfying
\eqref{eq:sn-forecaster-v}.  We denote
\begin{align*}
\Omega_C 
&:=  \left\{ \omega\in \Omega \mid A_n^2 \rightarrow \infty, \limsup_n c_n \frac{\psi(A_n^2)^3}{A_n} < (1-\delta)C\right\},\\
\Omega_0 
&:=  \left\{ \omega\in \Omega \mid \lim_{n \rightarrow \infty}A_n^2  < \infty\right\},\\
\Omega_{=\infty}
&:= \left\{ \omega\in \Omega \mid A_n^2 \rightarrow \infty, \limsup_n c_n \frac{\psi(A_n^2)^3}{A_n} = \infty \right\}.
\end{align*}
We divide our proof of Proposition \ref{th:self-normalized-efkp-lil-dash}
into several subsections.
For notational simplicity we use the initial capital of $\alpha=1-2/e=(e-2)/e$ in this section.
In Sections \ref{subsec:uniform-mixture} and \ref{subsec:bss} we only consider $\br$ and $n$ with $n <  \sigma_\br$.  As in Lemma \ref{lem:keep-open} for the validity, 
this condition will be satisfied for sufficiently small $\br$ and relevant $n$.

\subsection{Uniform mixture of constant-proportion betting strategies}
\label{subsec:uniform-mixture}
We consider a continuous uniform mixture of constant-proportion strategies with 
the betting proportion $u\br$, $2/e\le u\le 1$. This is a Bayesian strategy, a similar one to which has been considered in \cite{KumonTakemuraTakeuchi2008Capital}. 

Define
\begin{align*}
\mcps^\br_n 
:=\int_{2/e}^1 \prod_{i=1}^{\min(n,\sigma_{\br}-1)} (1+u\br x_i)du, \qquad \mcps^\br_0=\alpha=1-e/2.
\end{align*}
At round $n < \sigma_\br$ this strategy bets
$
M_n = \int_{2/e}^1 u\br   \prod_{i=1}^{n-1} (1+u\br x_i) du .
$
Then by induction on $n<\sigma_\br$ the capital process is indeed written as
\begin{align*}
	\mcps^\br_n 
	&= 
	\mcps^\br_{n-1} + M_n x_n =\int_{2/e}^1 \prod_{i=1}^{n-1} (1+u\br x_i) du
	+ x_n\int_{2/e}^1 u\br\prod_{i=1}^{n-1} (1+u\br x_i) du \\ 
	&=   
	\int_{2/e}^1 \prod_{i=1}^n (1+u\br x_i)du.  %
\end{align*}
Applying \eqref{eq:cp-bound},  we have
\begin{align*}
	e^{ -\br^3 A_n^2 \bar c_n } \int_{2/e}^1 e^{u\br S_n - u^2\br^2 A_n^2/2}du 
	\le
	\mcps^\br_n 
	\le  
	e^{ \br^3 A_n^2 \bar c_n }\int_{2/e}^1 e^{u\br S_n - u^2\br^2 A_n^2/2}  du,
\end{align*}
for $n <  \sigma_{\br}$.
We further bound the integral in the following lemma.
\begin{lemma}
	\label{lem:mcps-upper} %
For $n < \sigma_{\br}$,
	\begin{numcases}
		{\mcps^\br_n\le }
		e^{\br^3 A_n^2 \bar c_n} e^{2\br(S_n/e - \br A_n^2/e^2)}
		& \text{if}\quad $S_n\le 2\br A_n^2/e$,
		\label{eq:mcps-positive1-sn}\\
		e^{\br^3 A_n^2 \bar c_n}\min \left\{ e^{S_n^2/(2A_n^2)} \frac{\sqrt{2\pi}}{\br A_n}, e^{\br S_n/2} \right\}
		&  \text{if}\quad $2\br A_n^2/e<S_n< \br A_n^2$,
		\label{eq:mcps-positive2-sn}
		\\
		\label{eq:mcps-positive3-sn}
		e^{\br^3 A_n^2 \bar c_n}\min\left\{e^{S_n^2/(2A_n^2)}\frac{\sqrt{2\pi}}{\br A_n},e^{\br S_n-\br^2 A_n^2/2}\right\}& \text{if}\quad $S_n \ge  \br A_n^2$.
	\end{numcases}
\end{lemma}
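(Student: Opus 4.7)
The plan is to work from the upper bound $\mcps^\br_n \le e^{\br^3 A_n^2 \bar c_n}\int_{2/e}^1 e^{f(u)}\,du$ established just before the lemma, where $f(u) := u\br S_n - u^2\br^2 A_n^2/2$, and to bound $J := \int_{2/e}^1 e^{f(u)}\,du$ in three different ways. Completing the square gives
\[
f(u) = \frac{S_n^2}{2A_n^2} - \frac{\br^2 A_n^2}{2}\left(u - \frac{S_n}{\br A_n^2}\right)^2,
\]
so $f$ is a downward parabola maximized at $u^\star := S_n/(\br A_n^2)$ with peak value $S_n^2/(2A_n^2)$. The three cases of the lemma correspond exactly to the three positions of $u^\star$ relative to $[2/e,1]$: $u^\star\le 2/e$, $u^\star\in(2/e,1)$, and $u^\star\ge 1$.

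First I would establish the universal \emph{Gaussian bound} by extending the domain of integration to all of $\mathbb{R}$ in the completed-square form,
\[
J \le e^{S_n^2/(2A_n^2)}\int_{-\infty}^\infty e^{-\br^2 A_n^2(u-u^\star)^2/2}\,du = \frac{\sqrt{2\pi}}{\br A_n}\,e^{S_n^2/(2A_n^2)},
\]
which supplies the first term inside the minima of \eqref{eq:mcps-positive2-sn} and \eqref{eq:mcps-positive3-sn}. Next come the endpoint bounds by monotonicity: in Case 1 ($S_n\le 2\br A_n^2/e$), $u^\star\le 2/e$ so $f$ is non-increasing on $[2/e,1]$ and $J \le (1-2/e)\,e^{f(2/e)} \le e^{2\br(S_n/e - \br A_n^2/e^2)}$; in Case 3 ($S_n\ge \br A_n^2$), $u^\star \ge 1$ so $f$ is non-decreasing on $[2/e,1]$ and $J \le e^{f(1)} = e^{\br S_n - \br^2 A_n^2/2}$. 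For the $e^{\br S_n/2}$ alternative in Case 2, the maximum of $f$ on the interval is attained at the interior point $u^\star$, so $J \le (1-2/e)\,e^{f(u^\star)} \le e^{S_n^2/(2A_n^2)}$, and combining with the case hypothesis $S_n < \br A_n^2$ gives $S_n^2/(2A_n^2) = (S_n/A_n^2)(S_n/2) \le \br S_n/2$.

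The prefactor $e^{\br^3 A_n^2 \bar c_n}$ is inherited directly from the pre-lemma upper bound on $\mcps^\br_n$, and the trivial inequality $(1-2/e)<1$ is used silently to drop the length of the integration interval. I do not anticipate any serious obstacle; the only thing to watch is the case bookkeeping on where $u^\star$ sits relative to $[2/e,1]$, and no new inequality beyond completing the square plus one Gaussian integral is needed.
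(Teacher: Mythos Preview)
Your proposal is correct and follows essentially the same route as the paper: complete the square to locate the maximizer $u^\star=S_n/(\br A_n^2)$, derive the universal Gaussian bound $\sqrt{2\pi}/(\br A_n)\,e^{S_n^2/(2A_n^2)}$ by extending the integral to $\mathbb{R}$, and obtain the endpoint/interior bounds via $\int_{2/e}^1 g(u)\,du \le (1-2/e)\max_{[2/e,1]} g \le \max_{[2/e,1]} g$, with the three cases corresponding to $u^\star\le 2/e$, $u^\star\in(2/e,1)$, $u^\star\ge 1$. The paper's argument is identical in substance; your only cosmetic difference is that you keep the factor $(1-2/e)$ explicit before dropping it, whereas the paper absorbs it immediately.
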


\begin{proof}
	Completing the square we have
	\begin{equation*}
		- \frac{1}{2}u^2\br^2 A_n^2  + u\br S_n = -\frac{\br^2 A_n^2}{2}  \left(u-\frac{S_n}{\br A_n^2}\right)^2 + 
		\frac{S_n^2}{2A_n^2}.
	\end{equation*}
	Hence  by the change of variables
	\[
	v = \br A_n \left( u-\frac{S_n}{\br A_n^2}\right), \qquad  du = \frac{dv}{\br A_n},
	\]
	we obtain
	\begin{align*}
		\int_{2/e}^1 e^{u \br S_n - u^2\br^2 A_n^2/2}  du 
		&=
		e^{S_n^2/(2A_n^2)}
		\int_{2/e}^1 \exp \left(-\frac{\br^2 A_n^2}{2}  \left(u-\frac{S_n}{\br A_n^2}\right)^2\right) du\nonumber \\
		&= 
		e^{S_n^2/(2A_n^2)}\frac{1}{\br A_n} \int_{2 \br A_n/e-S_n/ A_n }^{\br A_n - S_n/A_n}
		e^{-v^2/2} dv .
	\end{align*}
	Then for all cases we can bound $\mcps^\br_n$ from above as
	\begin{equation}
		\label{eq:Q-bound-up-1}
		\mcps^\br_n \le  e^{\br^3 A_n^2 \bar c_n + S_n^2/(2A_n^2)} \frac{\sqrt{2\pi}}{\br A_n}.
	\end{equation}

	Without change of variables, we can also bound 
	the integral $\int_{2/e}^1 g(u)du$, $g(u):=e^{u \br S_n - u^2\br^2 A_n^2/2}$, directly as
	\[
	\int_{2/e}^1 g(u)du \le
	\max_{2/e\le u\le 1} g(u).
	\]
	Note that 
	\begin{equation}
		g(2/e)=e ^{2 \br (S_n/e - \br A_n /e^2)}, \quad g(1)=e^{ \br S_n-\br^2 A_n^2/2}.
		\label{eq:g-lower-upper}
	\end{equation}
	We now consider the following three cases.
	\begin{description}
		\item[Case 1] $ S_n \le 2 \br A_n^2/e$. \ In this case $S_n/(\br A_n^2) \le 2/e$ and
		by the unimodality of $g(u)$ we have $\max_{2/e\le u \le 1}g(u)= g(2/e)$. Hence 
		\eqref{eq:mcps-positive1-sn} follows from \eqref{eq:g-lower-upper}.
		\item[Case 2] $2 \br A_n^2/e < S_n < \br A_n^2$. \  
		In this case  $\max_{2/e\le u \le 1} g(u)=g(S_n/(\br A_n^2))=e^{S_n^2/(2A_n^2)}$ and
		$\mcps^\br_n 
		\le
		e^{\br^3 A_n^2 \bar c_n} e^{S_n^2/(2A_n^2)}$.
		Furthermore in this case $S_n^2 < \br A_n^2 S_n$ implies $S_n^2/(2A_n^2) <  \br S_n/2$
		and we also have
		\begin{equation}
			\label{eq:qn1-case2a}
			\mcps^\br_n  
			\le
			e^{\br^3 A_n^2 \bar c_n} e^{\br S_n/2}.
		\end{equation}
		By \eqref{eq:Q-bound-up-1} %
		and \eqref{eq:qn1-case2a}, 
		we have \eqref{eq:mcps-positive2-sn}.
		\item[Case 3] $S_n \ge \br A_n^2$.  \ Then $S_n/(\br A_n^2)\ge 1$ %
		and $\max_{2/e\le u \le 1} g(u)=g(1)$. Hence
		\begin{equation}
			\label{eq:qn1-case1}
			\mcps^\br_n  
			\le
			e^{\br^3 A_n^2 \bar c_n} e^{ \br S_n-\br^2 A_n^2/2}. %
		\end{equation}
		By \eqref{eq:Q-bound-up-1} and \eqref{eq:qn1-case1},
		we have \eqref{eq:mcps-positive3-sn}.
	\end{description} 
\end{proof}

\subsection{Buying a process and selling a process}
\label{subsec:bss}
Next we consider the following capital process.
\begin{equation}
	\bss_n^{\br} :=  2 \mcps_n^\br  - \cps_n^{\br e}.
\label{eq:bssn-def}
\end{equation}
This capital process consists of buying two units of $\mcps_n^\br$ and
selling one unit of $\cps_n^{\br e}$. 
This combination of selling and buying is essential in the game-theoretic proof of LIL
in Chapter 5 of \cite{ShaferVovk2001Probability} and \cite{MiyabeTakemura2013Law}.
However, unlike Chapter 5 of \cite{ShaferVovk2001Probability} and \cite{MiyabeTakemura2013Law}, where
a combination of {\em three} capital processes is used, we only combine {\em two} capital processes.

We want to bound $\bss_n^{\br}$ from above. 

\begin{lemma}\label{lem:bss-bound}
	Let 
	\begin{align}
		\label{eq:case-i-negative-condition-1}
		C_1 
		&:= 2 e^{\br^3 A_n^2 \bar c_n} 
		\exp \left(\frac{(2e-1)((1+e^3)\br^3 A_n^2 \bar c_n + \ln 2)}{(e-1)^2}\right).
	\end{align}
	Then for  $n < \sigma_{\br e}$,
	\begin{numcases}
		{\bss_n^\br \le }
		C_1 
		& \text{if}\quad $S_n\le \br A_n^2/e$, \label{eq:bss-positive1-sn}\\
		2 e^{\br^3 A_n^2 \bar c_n}\min \left\{ e^{S_n^2/(2A_n^2)} \frac{\sqrt{2\pi}}{\br A_n}, e^{\br S_n} \right\}
		&  \text{if}\quad $\br A_n^2/e<S_n< e\br A_n^2$,
		\label{eq:bss-positive2-sn}
		\\
		\label{eq:bss-positive3-sn}
		C_1 
		& \text{if}\quad $S_n \ge e \br A_n^2$.
	\end{numcases}
\end{lemma}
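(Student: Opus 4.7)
The plan is to treat the three regions for $S_n$ separately, starting from the definition $\bss_n^\br = 2\mcps_n^\br - \cps_n^{\br e}$ and using Lemma \ref{lem:mcps-upper} for $\mcps_n^\br$ together with the lower half of \eqref{eq:cp-bound} (applied with betting proportion $\br e$, valid since $n<\sigma_{\br e}$) for $\cps_n^{\br e}$. Write $A := \br^3 A_n^2 \bar c_n$ and $T := \br^2 A_n^2$ for brevity. The bounds in Cases 1 and 2 follow from the simpler estimate $\bss_n^\br \le 2\mcps_n^\br$, obtained by dropping the nonnegative term $\cps_n^{\br e}$. For Case 1, $S_n \le \br A_n^2/e$ lies inside the first case of Lemma \ref{lem:mcps-upper}, and the exponent $2\br S_n/e - 2T/e^2$ is nonpositive there, giving $\mcps_n^\br \le e^A$ and hence $\bss_n^\br \le 2e^A \le C_1$. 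For Case 2, the first argument of the min comes from the universal Gaussian estimate $\mcps_n^\br \le e^A e^{S_n^2/(2A_n^2)}\sqrt{2\pi}/(\br A_n)$ derived inside the proof of Lemma \ref{lem:mcps-upper}; the second argument $e^{\br S_n}$ is verified by splitting $(\br A_n^2/e, e\br A_n^2)$ into the three sub-intervals of that lemma and noting that each of the bounds $e^{2\br S_n/e - 2T/e^2}$, $e^{\br S_n/2}$, and $e^{\br S_n - T/2}$ is majorized by $e^{\br S_n}$ when $S_n \ge 0$.

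Case 3 ($S_n \ge e\br A_n^2$) is where the subtraction is essential. Since $u^*:=S_n/(\br A_n^2)\ge e>1$, the integrand $g(u):=e^{u\br S_n-u^2T/2}$ of $\mcps_n^\br$ is increasing on $[2/e,1]$, so the tighter estimate $\mcps_n^\br\le(1-2/e)e^A e^{\br S_n-T/2}$ holds. Combined with $\cps_n^{\br e}\ge(1-2/e)e^{-e^3 A}e^{\br e S_n-e^2 T/2}$ and factoring the common $(1-2/e)$ out, the goal reduces to $\tilde F(S_n):=2e^A e^{\br S_n-T/2}-e^{-e^3 A}e^{\br e S_n-e^2 T/2}\le C_1$ on $\{S_n\ge e\br A_n^2\}$. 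Reparameterizing by $P:=e^{\br S_n-T/2}$ and using the algebraic identity $e^{\br e S_n-e^2 T/2}=P^e\exp(-e(e-1)T/2)$, $\tilde F$ becomes a unimodal function of $P$ with a unique global maximizer $P^*$. Comparing $P^*$ with the boundary value $P_1:=e^{T(2e-1)/2}$ (attained at $S_n=e\br A_n^2$) splits the constrained analysis into two sub-cases; in either, the defining condition either forces $\tilde F\le 0$ trivially (when $T$ exceeds $2[\ln 2+(1+e^3)A]/(e-1)^2$) or else allows substitution of $T\le 2[\ln 2+(1+e^3)A]/(e-1)^2$ into the exponent $T(2e-1)/2$ (which is $\br S_n - T/2$ evaluated at $P_1$), converting it into $(2e-1)[\ln 2+(1+e^3)A]/(e-1)^2=\ln(C_1/(2e^A))$ and yielding $\tilde F\le C_1$.

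The main obstacle is the bookkeeping in Case 3: the specific constants in $C_1$ — particularly the coefficient $(2e-1)/(e-1)^2$ — arise from the algebraic identity $e^2/2-(2e-1)/2=(e-1)^2/2$ linking the two exponents of $\tilde F$, and matching $C_1$ exactly requires both the tighter integral bound $(1-2/e)g(1)$ for $\mcps_n^\br$ in Case 3 (sharper than the bound recorded in Lemma \ref{lem:mcps-upper}) and the $(1-2/e)$ factor coming from the initial capital of $\cps_n^{\br e}$. Tracking these multiplicative constants while handling the interior-maximum and boundary-maximum sub-cases of $\tilde F$ in a uniform way is what makes this step delicate.
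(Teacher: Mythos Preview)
Your treatment of Cases 1 and 2 matches the paper's. In Case~3 your route diverges and the sketch leaves a small gap.

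The paper does not introduce the maximizer $P^*$ at all. It writes
\[
\bss_n^\br \le 2e^{A}e^{\br S_n - T/2}\Big(1 - \tfrac12 e^{-(1+e^3)A}e^{\br(e-1)S_n-(e^2-1)T/2}\Big)
\]
and splits on the sign of the bracket \emph{for the given} $S_n$. If the bracket is $\le 0$ then $\bss_n^\br\le0$. Otherwise $\br S_n < B_n/(e-1)+(e+1)T/2$ with $B_n=(1+e^3)A+\ln2$; adding $\br$ times the constraint $-S_n+e\br A_n^2\le0$ gives first $T<2B_n/(e-1)^2$ and then $\br S_n - T/2 < (2e-1)B_n/(e-1)^2$, so that simply dropping the negative term yields $\bss_n^\br \le 2e^A e^{\br S_n-T/2}\le C_1$. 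No unimodality or maximizer comparison is needed, and the coefficient $(2e-1)/(e-1)^2$ falls out of the two linear inequalities directly.

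In your version, the interior-maximum sub-case $P^*>P_1$ is not actually covered by the argument you state. The substitution of $T\le 2B_n/(e-1)^2$ into $T(2e-1)/2=\ln P_1$ bounds $\tilde F$ only at the boundary $P_1$; when the constrained maximum sits at $P^*$ one has $\tilde F(P^*)=2(1-1/e)e^AP^*$ with $\ln P^*=(B_n-1)/(e-1)+eT/2$, and a separate estimate (using $T\le 2(B_n-1)/(e-1)^2$ and then $(1-1/e)e^{-(2e-1)/(e-1)^2}<1$) is required to close. This is easy to supply, but your sketch asserts both sub-cases are handled ``in a uniform way'' by the boundary exponent $T(2e-1)/2$, which they are not.

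Your remark that matching $C_1$ exactly requires the tighter bound $\mcps_n^\br\le(1-2/e)e^Ag(1)$ together with the factor $\alpha=1-2/e$ in the lower bound for $\cps_n^{\br e}$ is correct and in fact more careful than the paper, which silently omits the $\alpha$ when invoking \eqref{eq:cp-bound} for $\cps_n^{\br e}$; since your two $(1-2/e)$ factors cancel, your $\tilde F$ coincides with the expression the paper analyzes and $C_1$ is recovered as stated.
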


\begin{remark}
	\label{rem:2}
	In this lemma, $C_1$ depends on $\bar c_n$, $\br$ and $A_n$ through $\br^3 A_n^2 \bar c_n$.
	However from Section \ref{subsec:Futher discrete mixture of processes} on, we evaluate
	$\br^3 A_n^2 \bar c_n$ from above by a constant. Hence, $C_1$ can be also taken to be
	a constant (cf.\ \eqref{eq:C_1-const}) not depending on $\br$ and $A_n$. 
	Also note that the interval for $S_n$ in
	\eqref{eq:bss-positive2-sn} is larger than the interval in \eqref{eq:mcps-positive2-sn}.
\end{remark}

\begin{proof}
	We bound $\bss_n^\br =2\mcps_n^\br - \cps_n^{\br e}$ from above
	in the following three cases:
	\[
	\text{(i)}\  S_n \le \br A_n^2 /e,  \quad 
	\text{(ii)}\ \br A_n^2/e < S_n < e \br A_n^2, \quad 
	\text{(iii)}\ S_n \ge e  \br A_n^2,
	\]
	\begin{description}
		\item[Case (i)]\ In this case $S_n/e - \br A_n^2 /e^2\le 0$. Hence
		\eqref{eq:bss-positive1-sn} follows from \eqref{eq:mcps-positive1-sn} and $\bss_n^\br \le 2 \mcps_n^\br$.
		
		\item[Case (ii)] 
		We again use $\bss^\br_n\le 2\mcps^\br_n$.
		If $\br A_n^2/e < S_n \le 2 \br A_n^2/e$, then 
		\[
		\frac{S_n}{e} - \frac{\br A_n^2 }{e^2}\le \frac{\br A_n^2}{e^2} \le  \frac{S_n}{e}
		\]
		and 
		$\mcps_n^\br\le e^{\br^3 A_n^2 \bar c_n}e^{2\br S_n/e} \le e^{\br^3 A_n^2 \bar c_n}e^{\br S_n}$
		from \eqref{eq:mcps-positive1-sn}.
		Otherwise \eqref{eq:bss-positive2-sn} follows from 
		\eqref{eq:mcps-positive2-sn} and \eqref{eq:mcps-positive3-sn}.
		

		\item[Case (iii)] \ 
		Since $S_n \ge e A_n^2 \br > A_n^2 \br$, by \eqref{eq:qn1-case1}
		we have $\mcps^\br_n  \le e^{\br^3 A_n^2 \bar c_n} e^{ \br S_n-\br^2A_n^2/2}$
		and 
		\begin{align*}
			\bss_n^\br
			&\le 
			2\mcps_n^\br - \cps_n^{\br e}
			\le 2  e^{\br^3 A_n^2 \bar c_n} e^{ \br S_n-\br^2A_n^2/2} - e^{-\br^3 e^3 A_n^2 \bar c_n}e^{\br e S_n - \br^2 e^2 A_n^2/2}\\
			&= 
			2e^{\br^3 A_n^2 \bar c_n} e^{ \br S_n-\br^2A_n^2/2} 
			\left( 1 - \frac{1}{2} e^{-(1+e^3)\br^3A_n^2 \bar c_n}e^{\br (e-1)S_n - (e^2-1)\br^2A_n^2/2}\right). 
		\end{align*}
		Hence if the right-hand side is non-positive we have $\bss_n^\br \le 0$:
		\begin{align}
			&S_n \ge e A_n^2 \br \ \  \text{and}\ -(1+e^3)\br^3 A_n^2 \bar c_n - \ln 2 + \br(e-1)S_n - \frac{1}{2}(e^2-1)\br^2A_n^2 \ge 0 \nonumber\\
			& \qquad \qquad \Rightarrow \ \ \bss_n^\br \le 0.
			\label{eq:case-iii-1}
		\end{align}
		Otherwise, write  $B_n:=(1+e^3) \br^3 A_n^2 \bar c_n + \ln 2$ and 
		consider the case 
		\begin{align*}
		\br(e-1)S_n - \frac{1}{2}(e^2-1) \br^2 A_n^2  \le  B_n.
		\end{align*}
		Dividing this by $e-1$ 
		and also considering $S_n \ge e A_n^2\br$, we have
		\begin{align}
			\label{eq:case-iii-1a}
			\br S_n - \frac{1}{2}(e+1) \br^2A_n^2 &\le  \frac{B_n}{e-1},\\
			-S_n +eA_n^2 \br &\le 0. 
			\label{eq:case-iii-1b}
		\end{align}
		$\br \times \eqref{eq:case-iii-1b} + \eqref{eq:case-iii-1a}$ gives
		\begin{align*}
		\frac{1}{2}(e-1) \br^2 A_n^2 \le \frac{B_n}{e-1} \quad \text{or} \quad \frac{1}{2}  \br^2 A_n^2 \le \frac{B_n}{(e-1)^2}.
		\end{align*}
		Then by \eqref{eq:case-iii-1a}
		\begin{align*}
		\br S_n - \frac{1}{2}\br^2 A_n^2 \le \frac{B_n}{e-1} + \frac{e}{2} \br^2 A_n^2 \le
		\frac{B_n}{e-1} + \frac{eB_n}{(e-1)^2}=\frac{(2e-1)B_n}{(e-1)^2}.
		\end{align*}
		Hence just using $\bss_n^\br\le 2\mcps_n^\br$ and \eqref{eq:qn1-case1} in this case, we obtain
		\begin{align}
			\label{eq:case-iii-conclusion}
			\bss_n^\br  \le 2 e^{\br^3 A_n^2 \bar c_n} \exp\left(\frac{(2e-1)((1+e^3) \br^3A_n^2 \bar c_n + \ln 2 )}{(e-1)^2}\right) = C_1.
		\end{align}
		This also covers \eqref{eq:case-iii-1}
		and we have \eqref{eq:case-iii-conclusion} for the whole case (iii).
	\end{description}
\end{proof}

\subsection{Change of time scale and dividing the rounds into cycles}
\label{subsec:Change of time scale}
For proving the sharpness we consider the change of time scale 
from $\lambda$ to $k$:
\[
\lambda= e^{5k \ln k} = k^{5k}.
\]
By taking the derivative of  $\ln \lambda= 5 k \ln k$, we have
$
d\lambda/\lambda =  5(\ln k+1)dk. 
$
Since $\ln k$ is dominant in $(\ln k+1)$, 
the integrability condition is written as
\begin{equation*}
	\int_1^\infty \psi(\lambda) e^{-\psi(\lambda)^2/2} \frac{d\lambda}{\lambda}  = \infty
	\  \Leftrightarrow \ 
	\int_1^\infty (\ln k) \psi(e^{5k\ln k}) e^{-\psi(e^{5k\ln k})^2/2} dk = \infty.
\end{equation*}
Let $f(x):=\psi(e^{5x\ln x}) e^{-\psi(e^{5x\ln x})^2/2}$.
Since $xe^{-x^2/2}$ is decreasing for $x\ge 1$,
the function $f(x)$ is decreasing for $x$ such that $\psi(e^{5x\ln x})\ge 1$.
Thus, for sufficiently large $k$ and $x$ such that $k\le x\le k+1$, we have
\[
\frac{1}{2}\ln (k+1) f(k+1) \le  \ln k f(x+1)\le  \ln x f(x) \le  \ln (k+1) f(x)\le 2\ln k f(k).
\]
Hence, we have
\begin{equation*}
	\int_1^\infty (\ln k) \psi(e^{5k\ln k}) e^{-\psi(e^{5k\ln k})^2/2} dk = \infty 
	\ \ \Leftrightarrow \ \ 
	\sum_{k=1}^\infty  (\ln k) \psi(e^{5k\ln k}) e^{-\psi(e^{5k\ln k})^2/2} = \infty .
\end{equation*}
Then, it suffices to show \eqref{eq:sn-forecaster-s} if $\sum_{k=1}^\infty  (\ln k) \psi(e^{5k\ln k}) e^{-\psi(e^{5k\ln k})^2/2} = \infty.$

As in Chapter 5 of \cite{ShaferVovk2001Probability} and \cite{MiyabeTakemura2013Law},
we divide the time axis into  ``cycles''.  However, unlike 
in Chapter 5 of \cite{ShaferVovk2001Probability} and \cite{MiyabeTakemura2013Law},
our cycles are based on stopping times.
Let
\begin{equation}
\label{eq:nk-sharpness}
n_k := k^{5k}, \quad k=1,2,\dots,
\end{equation}
and define a family of stopping times
\begin{equation}
\label{eq:stopping-time-cycle}
\tau_k:=
\min \left\{n \mid A^2_n \ge n_k \right\}. 
\end{equation}
We define the $k$-th cycle by $[\tau_k, \tau_{k+1}]$, $k\ge 1$.
Note that $\tau_k$ is finite for all $k$ if and only if $A_n^2 \rightarrow\infty$.
Betting strategy for the $k$-th cycle is based on the following betting proportion:
\begin{equation}
\label{eq:br_k}
\br_k := \frac{\psi(n_{k+1})}{\sqrt{n_{k+1}} }k^2.
\end{equation}
Note that $\br_k$ in \eqref{eq:br_k} is slightly different from \eqref{eq:validity-br}.

For the rest of this section, we check the growth of various quantities along the cycles.
Let $\omega\in \Omega_C$.  For sufficiently large $n$, 
\begin{equation}
\label{eq:xn2bound0}
|x_n|\le c_n \le C \frac{A_n}{\psi(A_n^2)^3}.
\end{equation}
Furthermore $A_n^2 = A_{n-1}^2 + x_n^2$.  This allows us to bound $x_n^2$ and $A_n^2$  in terms of $A^2_{n-1}$.
By squaring  \eqref{eq:xn2bound0} 
we have
\begin{equation} 
\label{eq:xn2bound}
		x_n^2 \le C^2 \frac{A_{n-1}^2}{\psi(A_n^2)^6-C^2}
\end{equation}
and
\begin{equation}
\label{eq:An2bound}
	A_n^2 = A_{n-1}^2 + x_n^2 \le A_{n-1}^2 ( 1+ \frac{C^2}{\psi(A_n^2)^6-C^2}) = 
A_{n-1}^2 \frac{\psi(A_n^2)^6}{\psi(A_n^2)^6-C^2}.
\end{equation}
Since $\psi(A_n^2)^6/(\psi(A_n^2)^6-C^2) \rightarrow 1$ as $n\rightarrow \infty$, we have
%
\begin{equation*}
\lim_{n\rightarrow\infty} \frac{A_n^2}{A_{n-1}^2} = 1.
\end{equation*}
Note that $A_{\tau_k-1}^2 < n_k \le A_{\tau_k}^2$ by the definition of $\tau_k$. Hence for $\omega\in \Omega_C$
we also have
\begin{equation}
\label{eq:stop-error}
\lim_{k\rightarrow\infty}\frac{A_{\tau_k}^2}{n_k} = 1.
\end{equation}
The limits in the following lemma will be useful for our argument.
\begin{lemma}
\label{lem:cycle-growth}
For $\omega\in \Omega_C$
\begin{equation}
	\lim_{k \rightarrow \infty}	\frac{\uc(n_k)}{\psi(n_{k+1})} =1, \quad 
	\lim_{k \rightarrow \infty} \frac{k^5A_{\tau_k}^2}{n_{k+1}} = e^{-5},\quad
	\lim_{k \rightarrow \infty} \br_k A_{\tau_k}\psi(n_{k+1})= 0. \quad
\end{equation}
\end{lemma}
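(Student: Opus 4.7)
The plan is to reduce all three limits to the basic asymptotic relation $n_{k+1}/n_k \sim e^5 (k+1)^5$, combined with the pincer bound $\lc(n) \le \psi(n) \le \uc(n)$ from \eqref{eq:uc0} and the stopping-time consistency \eqref{eq:stop-error}. First I would establish the growth rate of the iterated logarithms along the sequence $n_k = k^{5k}$: since $\ln n_k = 5k \ln k$, we have $\ln_2 n_k = \ln k + \ln_2 k + \ln 5 = \ln k \,(1+o(1))$ and $\ln_3 n_k = \ln_2 k \,(1+o(1))$, so that both $\lc(n_k)$ and $\uc(n_k)$ are asymptotic to $\sqrt{2 \ln k}$. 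The same calculation applied to $n_{k+1}$ also yields $\sqrt{2 \ln(k+1)} \sim \sqrt{2 \ln k}$. Squeezing $\psi(n_{k+1})$ between $\lc(n_{k+1})$ and $\uc(n_{k+1})$ via \eqref{eq:uc0} gives $\psi(n_{k+1}) \sim \sqrt{2 \ln k}$, which immediately delivers the first limit $\uc(n_k)/\psi(n_{k+1}) \to 1$.

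For the second limit, I would compute
\[
\ln\frac{n_{k+1}}{n_k} = 5(k+1)\ln(k+1) - 5k\ln k = 5k\ln\!\left(1+\tfrac{1}{k}\right) + 5\ln(k+1) = 5 + 5\ln(k+1) + o(1),
\]
so $n_{k+1}/n_k = e^5 (k+1)^5 (1+o(1))$. Factoring
\[
\frac{k^5 A_{\tau_k}^2}{n_{k+1}} = \frac{A_{\tau_k}^2}{n_k}\cdot \frac{n_k}{n_{k+1}}\cdot k^5
\]
and applying \eqref{eq:stop-error} to the first factor and the asymptotic above to the remaining ones, together with $k^5/(k+1)^5 \to 1$, yields the value $e^{-5}$.

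For the third limit, substitute the definition of $\br_k$ from \eqref{eq:br_k} to obtain
\[
\br_k A_{\tau_k}\psi(n_{k+1}) = k^2 \psi(n_{k+1})^2 \cdot \frac{A_{\tau_k}}{\sqrt{n_{k+1}}}.
\]
The asymptotic $\psi(n_{k+1})^2 \sim 2\ln k$ from the first step, combined with $A_{\tau_k}/\sqrt{n_{k+1}} \sim e^{-5/2}(k+1)^{-5/2}$ coming from the second step, gives an expression of order $k^2 \cdot \ln k \cdot k^{-5/2} = (\ln k)/\sqrt{k} \to 0$.

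The only real work is bookkeeping in the first step (the iterated-logarithm expansions along the geometric-like subsequence $n_k = k^{5k}$) and the $\ln(1+1/k)$ expansion used to pin down $n_{k+1}/n_k$; none of the steps involve any probabilistic content and no step looks like a genuine obstacle. The main thing to be careful about is that \eqref{eq:stop-error} requires $\omega \in \Omega_C$, which is exactly the hypothesis of the lemma.
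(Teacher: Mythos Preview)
Your proposal is correct and follows essentially the same route as the paper: both arguments reduce the first limit to the fact that $\lc(n_k),\uc(n_k),\psi(n_{k+1})$ are all $\sqrt{2\ln k}\,(1+o(1))$ via \eqref{eq:uc0}, use \eqref{eq:stop-error} together with the asymptotic $k^5 n_k/n_{k+1}\to e^{-5}$ for the second, and combine these to get the $(\ln k)/\sqrt{k}$ decay in the third. The only cosmetic difference is that the paper computes $k^5 n_k/n_{k+1}=\bigl(1-\tfrac{1}{k+1}\bigr)^{5(k+1)}\to e^{-5}$ directly, whereas you reach the equivalent $n_{k+1}/n_k\sim e^{5}(k+1)^{5}$ via the logarithmic expansion.
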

\begin{proof}
	All of $\uc(n_k)$, $\uc(n_{k+1})$, $\lc(n_k)$, $\lc(n_{k+1}), \psi(n_{k+1}), \psi(n_{k+1} /k^4)$ 
	are of the order 
	\begin{equation}
	\label{eq:order-of-nk}
	\sqrt{2 \ln \ln e^{5k\ln k}}(1+o(1))=\sqrt{2\ln k}(1+o(1))
	\end{equation}
	as $k\rightarrow\infty$ and the first equality holds by \eqref{eq:uc0}.
	The second equality holds by \eqref{eq:stop-error} and
	\begin{align*}
    \lim_{k\rightarrow\infty} \frac{k^5 n_k}{n_{k+1}}=
	\lim_{k\rightarrow\infty} \frac{k^{5(k+1)}}{(k+1)^{5(k+1)}} =\lim_{k\rightarrow\infty} \left(1-\frac{1}{k+1}\right)^{5(k+1)}
	= e^{-5}.
	\end{align*}
	Then $A^2_{\tau_k}/n_{k+1}=(1+o(1))n_k/n_{k+1}=O(k^{-5})$ and the third equality holds by
	\begin{align*}
	\br_k A_{\tau_k}\psi(n_{k+1}) \le \psi(n_{k+1})^2k^2 ((1+\delta)n_k/n_{k+1})^{1/2} \rightarrow 0 \qquad(k\rightarrow\infty).
	\end{align*}
\end{proof}

\subsection{Stopping times for aborting and sequential  freezing for each cycle}
In \eqref{eq:bssd} of the next section we will introduce another capital process 
$\bssd_n^{\br_k,k}$, which will be employed in each cycle.
Here we introduce some stopping times for aborting the cycle and for 
sequential freezing of accounts in $\bssd_n^{\br_k,k}$.

We say that we {\em abort} the $k$-th cycle, 
when we freeze all accounts in the $k$-th cycle and wait for the $(k+1)$-st cycle.
There are two cases for aborting the $k$-th cycle. 
The first case is when some $c_n$ is too large for $\omega\in \Omega_{C}$. 
Define
\begin{equation}
\label{eq:sigma-kC}
\sigma_{k,C} := \min \left \{n \ge \tau_k \mid c_n \psi(A^2_{\tau_{k}})^3  > (1+\delta)C A_{n-1}\right \}.
\end{equation}
We will abort the $k$-th cycle if $\sigma_{k,C} < \tau_{k+1}$.
Note that for $\omega \in \Omega_C$, there exists $k_1(\omega)$ such that
\begin{equation}
\label{eq:simga-kC-infty}
 \sigma_{k,C}=\infty,\  \ \text{for}  \ \ k \ge k_1(\omega).
\end{equation}

Another case is when $S_n$ is too large.  Define
\begin{align}
\label{eq:nu_k}
\nu_k := \min\{n \ge \tau_k  \mid A_{n} \psi(A^2_{n}) <  S_{n} \}.
\end{align}
If $\nu_k < \tau_{k+1}$, then Skeptic is happy to abort the $k$-th cycle, because he wants to force
$S_n \ge A_{n} \psi(A^2_{n})\ i.o.$  \ The above two stopping times will be used in the final construction 
of a dynamic strategy in Section \ref{subsec:skepticforcesharpness}.

For each cycle, we define another family of stopping times indexed by $w=1,\ldots, \lceil \ln k\rceil$,
by
\begin{align}
\label{eq:tau-k-w}
\tau_{k,w}:=
\min \left\{n \mid A^2_n \ge e^{2(w+2)}  \frac{n_{k+1}}{k^4}\right\}. 
\end{align}
for sequential freezing of accounts of $\bssd_n^{\br_k,k}$ in \eqref{eq:bssd}.
We have $\tau_k \le \tau_{k,w}$ for $k \ge 1$ and $w\ge 1$, because
\[
\frac{n_{k+1}}{k^4} = \frac{(k+1)^{5(k+1)}}{k^4} > k^{5k}=n_k.
\]

\begin{lemma}
	\label{lemm:all-accounts-stop}
    Let $\omega \in \Omega_C$. 
$	\tau_{k,\lceil \ln k \rceil} \le \tau_{k+1}$ for sufficiently large $k$.
\end{lemma}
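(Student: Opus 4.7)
The plan is to reduce the lemma to a direct comparison of the two thresholds that define $\tau_{k,\lceil\ln k\rceil}$ and $\tau_{k+1}$, and then to invoke monotonicity of $A_n^2$. Since $\omega\in\Omega_C$ we have $A_n^2\to\infty$, so both stopping times are finite; thus it suffices to show that the threshold triggering $\tau_{k,\lceil\ln k\rceil}$ is no larger than $n_{k+1}$, the threshold triggering $\tau_{k+1}$.

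First I would use the elementary bound $\lceil\ln k\rceil\le\ln k+1$ to estimate
\[
e^{2(\lceil\ln k\rceil+2)}\le e^{2(\ln k+1+2)}=e^{2\ln k+6}=k^2 e^6,
\]
so that the defining threshold of $\tau_{k,\lceil\ln k\rceil}$ satisfies
\[
e^{2(\lceil\ln k\rceil+2)}\,\frac{n_{k+1}}{k^4}\;\le\;\frac{e^6\,n_{k+1}}{k^2}.
\]
Next I would observe that $e^6/k^2\le 1$ as soon as $k\ge \lceil e^3\rceil$, which is satisfied for all sufficiently large $k$. Combining these two inequalities gives
\[
e^{2(\lceil\ln k\rceil+2)}\,\frac{n_{k+1}}{k^4}\;\le\;n_{k+1}.
\]

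Finally I would conclude by monotonicity. Since $A_n^2=\sum_{i=1}^n x_i^2$ is non-decreasing in $n$, the first time it crosses a smaller level occurs no later than the first time it crosses a larger level. In particular, at $n=\tau_{k+1}$ we have
\[
A_{\tau_{k+1}}^2\;\ge\; n_{k+1}\;\ge\; e^{2(\lceil\ln k\rceil+2)}\,\frac{n_{k+1}}{k^4},
\]
so the stopping criterion in \eqref{eq:tau-k-w} for $w=\lceil\ln k\rceil$ has already been met by time $\tau_{k+1}$, giving $\tau_{k,\lceil\ln k\rceil}\le\tau_{k+1}$ as required.

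There is no serious obstacle here: the argument is a clean threshold comparison using only the definitions \eqref{eq:nk-sharpness}, \eqref{eq:stopping-time-cycle}, and \eqref{eq:tau-k-w}. The one point that deserves care is the appearance of the $e^6$ factor, which comes from the additive slack $+1$ in the bound $\lceil\ln k\rceil\le\ln k+1$ together with the $+2$ inside the exponent of \eqref{eq:tau-k-w}; this is why the conclusion holds only for $k$ large enough that the polynomial growth $k^2$ dominates the constant $e^6$.
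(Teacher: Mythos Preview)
Your proof is correct and in fact cleaner than the paper's. You reduce the lemma to a pure threshold comparison: since $e^{2(\lceil\ln k\rceil+2)}n_{k+1}/k^4\le e^6 n_{k+1}/k^2\le n_{k+1}$ for large $k$, and first-crossing times of the non-decreasing sequence $A_n^2$ are monotone in the level, the conclusion is immediate. This uses $\omega\in\Omega_C$ only through $A_n^2\to\infty$.

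The paper takes a less direct route: it first controls the \emph{overshoot} at $\tau_{k,w}$, using the bound \eqref{eq:xn2bound} on $x_n^2$ (which genuinely needs $\omega\in\Omega_C$) to obtain \eqref{eq:A_n-stop-above2}, namely $A_{\tau_{k,w}}^2\le(1+\delta)e^{2(w+2)}n_{k+1}/k^4$, and then specializes to $w=\lceil\ln k\rceil$ to get $A_{\tau_{k,\lceil\ln k\rceil}}^2\le n_{k+1}\le A_{\tau_{k+1}}^2$. The point of this detour is that the overshoot bound \eqref{eq:A_n-stop-above2} is reused in the proof of Lemma~\ref{lemm:sigma_br-sigma_c}, so the paper is establishing two things at once. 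For Lemma~\ref{lemm:all-accounts-stop} in isolation, your argument is the natural one; if you were writing the full section you would still need to prove \eqref{eq:A_n-stop-above2} somewhere before Lemma~\ref{lemm:sigma_br-sigma_c}.
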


\begin{proof}
	By
$	A^2_{\tau_{k,w}-1} \le e^{2(w+2)}n_{k+1} / k^4$
    and by \eqref{eq:xn2bound}, for sufficiently large $k$ we have
	\begin{align*}
	x_{\tau_{k,w}}^2 \le (1+\delta)C^2 \frac{A^2_{\tau_{k,w}-1} }{\psi(A^2_{\tau_{k}})^6} \le \frac{(1+\delta) C^2 }{\psi(A^2_{\tau_{k}})^6} \times \frac{e^{2(w+2)} n_{k+1}}{k^4}
	\end{align*}
	and
	\begin{align}
	\label{eq:A_n-stop-above2}
	A_{\tau_{k,w}}^2 \le A^2_{\tau_{k,w}-1} + x^2_{\tau_{k,w}}
	&\le  (1+\delta) e^{2(w+2)}  \frac{n_{k+1}}{k^4}.
	\end{align}
Then
	\begin{align*}
		A^2_{\tau_{k, \lceil \ln k \rceil } } \le (1+\delta) \left(e^{2(\ln k+2)} \frac{n_{k+1}}{k^4}\right) =
		 (1+\delta) e^4 \frac{n_{k+1}}{k^2} \le n_{k+1} \le A^2_{\tau_{k+1}}.
	\end{align*}
\end{proof}

We also compare $\tau_{k,w}$ to $\sigma_{\br_k e^{-w+1}}$ defined in \eqref{eq:stop}.
This is needed for applying the bounds derived in previous sections to 
$\bssd_n^{\br_k,k}$ in the next section. 
\begin{lemma}
\label{lemm:sigma_br-sigma_c}	
Let $\omega\in \Omega_C$. 
$\tau_{k,w}\le \sigma_{\br_k e^{-w+1}}$ 
for sufficiently large $k$. 
\end{lemma}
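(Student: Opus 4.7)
The plan is to recast $\tau_{k,w}\le\sigma_{\br_k e^{-w+1}}$ as the statement that $\br_k e^{-w+1}c_n\le\delta$ for every $n<\tau_{k,w}$, and to verify this by splitting the range of $n$ at the stopping time $\tau_k$. In each sub-range, the bound on $A_{n-1}$ forced by the defining inequality makes $c_n$ so small that, when multiplied by the tiny factor $\br_k e^{-w+1}$, the product tends to $0$ as $k\to\infty$, uniformly in $w\in\{1,\dots,\lceil\ln k\rceil\}$.

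For the main range $\tau_k\le n<\tau_{k,w}$, I would use that $\omega\in\Omega_C$ and \eqref{eq:simga-kC-infty} to assume $k\ge k_1(\omega)$, so $\sigma_{k,C}=\infty$ and \eqref{eq:sigma-kC} gives $c_n\le(1+\delta)CA_{n-1}/\psi(A_{\tau_k}^2)^3$. The condition $n<\tau_{k,w}$ supplies the upper bound $A_{n-1}<e^{w+2}\sqrt{n_{k+1}}/k^2$ from \eqref{eq:tau-k-w}, and substituting these along with the definition $\br_k=\psi(n_{k+1})k^2/\sqrt{n_{k+1}}$ from \eqref{eq:br_k} makes the factors of $e^w$, $k^2$, and $\sqrt{n_{k+1}}$ cancel neatly, leaving
\[
\br_k e^{-w+1}c_n\le(1+\delta)Ce^3\,\frac{\psi(n_{k+1})}{\psi(A_{\tau_k}^2)^3}.
\]
By Lemma \ref{lem:cycle-growth}, $\psi(A_{\tau_k}^2)/\psi(n_{k+1})\to 1$ and $\psi(A_{\tau_k}^2)\to\infty$, so this upper bound tends to $0$, independently of $w$.

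For the preceding range $n<\tau_k$, the definition \eqref{eq:stopping-time-cycle} gives $A_n\le A_{\tau_k-1}<\sqrt{n_k}$. For $n>n_1(\omega)$, \eqref{eq:sn-forecaster-v} together with $\psi\ge 1$ yields $c_n\le CA_n\le C\sqrt{n_k}$; the finitely many earlier $c_n$'s are dominated by $C\sqrt{n_k}$ once $k$ is large. Using $e^{-w+1}\le 1$,
\[
\br_k e^{-w+1}c_n\le C\,\psi(n_{k+1})\,k^2\sqrt{n_k/n_{k+1}},
\]
and because $\sqrt{n_k/n_{k+1}}=k^{5k/2}/(k+1)^{5(k+1)/2}=O(k^{-5/2})$ while $\psi(n_{k+1})=O(\sqrt{\ln k})$ by \eqref{eq:order-of-nk}, the right-hand side is $O(\sqrt{\ln k}/\sqrt{k})\to 0$, again uniformly in $w$.

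Combining the two ranges, both bounds fall below $\delta$ for all sufficiently large $k$ and all $w\in\{1,\dots,\lceil\ln k\rceil\}$, which is exactly what is required. I do not anticipate any serious obstacle; the only mildly delicate point is absorbing the first $n_1(\omega)$ potentially large rounds in the second case, but this is easy since $\br_k\to 0$ overwhelms any fixed $\omega$-dependent constant.
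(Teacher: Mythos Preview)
Your proof is correct and follows essentially the same route as the paper. The paper condenses your two cases into a single line by invoking Lemma~\ref{lem:bar_c_n_bound} (which already absorbs the early rounds and the pre-$\tau_k$ range) together with the overshoot bound \eqref{eq:A_n-stop-above2} on $A_{\tau_{k,w}}$, arriving at the identical estimate $\br_k e^{-w+1}\bar c_{\tau_{k,w}}\le (1+\delta)^2Ce^3\,\psi(n_{k+1})/\psi(A_{\tau_k}^2)^3\to 0$; your split at $\tau_k$ and separate handling of $n\le n_1(\omega)$ amount to reproving that lemma in place.
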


\begin{proof}
By \eqref{eq:A_n-stop-above2} and by Lemma
\ref{lem:bar_c_n_bound}, 
for sufficiently large $k$
\begin{align*}
\br_{k}e^{-w+1} \bar c_{\tau_{k,w}}
 \le \frac{\psi(n_{k+1})}{\sqrt{n_{k+1}}}k^2 e^{-w+1}\times
(1+\delta)^2 C  \frac{e^{w+2}   \sqrt{n_{k+1}} }{k^2 \psi(A^2_{\tau_k})^3} 
\le 
(1+\delta)^2 C e^3 \frac{\psi(n_{k+1})}{\psi(A^2_{\tau_k})^3} \le \delta,
\end{align*}
because 
$\psi(n_{k+1})/\psi(A^2_{\tau_k})^3 \rightarrow 0$ as $k\rightarrow\infty$
by \eqref{eq:order-of-nk}.
\end{proof}

\subsection{Further discrete mixture of processes for each cycle with sequential freezing}
\label{subsec:Futher discrete mixture of processes}
We introduce another discrete mixture of capital process for the $k$-th cycle. Define
\begin{align}
	\label{eq:bssd}
	\bssd_n^{\br_k,k}:= \frac{1}{\lceil \ln k \rceil}\sum_{w=1}^{\lceil \ln k \rceil} \bss_{\min(n,\tau_{k,w})}^{\br_k e^{-w}}  =  \frac{1}{\lceil \ln k \rceil}\sum_{w=1}^{\lceil \ln k \rceil} (2\mcps_{\min(n,\tau_{k,w})}^{\br_k e^{-w}}
	- \cps_{\min(n,\tau_{k,w})}^{\br_k e^{-w+1}}).
\end{align}
Note that  the $w$-th account in the sum of $\bssd_n^{\br_k,k}$is frozen at the stopping time $\tau_{k,w}$.
This is needed since the bound for $c_n$ is growing even during the $k$-th cycle. 

In order to bound $\bssd_n^{\br_k,k}$, we first 
bound $C_1$ in \eqref{eq:case-i-negative-condition-1} for each $w$ in the sum of \eqref{eq:bssd}
by a constant independent of $n$. Note that we only need to consider $n\le \tau_{k,w}$ for the $w$-th account.
\begin{lemma}
	\label{lem:C_1-const}
	Let $\omega \in \Omega_C$. 
        $(\br_k e^{-w})^3 A_n^2 \bar c_n$  and hence $C_1$ 
        are bounded from above by
	\begin{align}
		\label{eq:remainder-const}
                (\br_k e^{-w})^3 A_n^2 \bar c_n &\le (1+\delta)^5 C e^6,\\
		\label{eq:C_1-const}
		C_1 &\le 2 e^{(1+\delta)^5Ce^6} \exp\left(\frac{(2e-1)((1+\delta)^5Ce^6(1+e^3)+ \ln 2)}{(e-1)^2}\right)=:\bar C_1, 
	\end{align}
for sufficiently large $k$.
\end{lemma}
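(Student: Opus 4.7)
The plan is to reduce $(\br_k e^{-w})^3 A_n^2 \bar c_n$ to a product whose factors cancel cleanly, and then verify that all the remaining ``slack'' is controlled by $\psi$-ratios that tend to $1$. Since the $w$-th summand of $\bssd_n^{\br_k,k}$ is frozen at $\tau_{k,w}$, we need only bound the quantity for $n\le\tau_{k,w}$, in which case $A_n\le A_{\tau_{k,w}}$ and $\bar c_n\le \bar c_{\tau_{k,w}}$. Writing $A_{\tau_{k,w}}^2\bar c_{\tau_{k,w}}=A_{\tau_{k,w}}^3\cdot(\bar c_{\tau_{k,w}}/A_{\tau_{k,w}})$ isolates the quantities that admit good bounds.

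Next I would plug in three ingredients, all valid for $\omega\in\Omega_C$ and sufficiently large $k$: first, $\br_k=\psi(n_{k+1})k^2/\sqrt{n_{k+1}}$ from \eqref{eq:br_k}; second, $A_{\tau_{k,w}}^2\le(1+\delta)e^{2(w+2)}n_{k+1}/k^4$, which is exactly \eqref{eq:A_n-stop-above2} from the proof of Lemma~\ref{lemm:all-accounts-stop}; third, $\bar c_{\tau_{k,w}}\le(1+\delta)CA_{\tau_{k,w}}/\psi(A_{\tau_{k,w}}^2)^3$, obtained by applying Lemma~\ref{lem:bar_c_n_bound} to $\omega\in\Omega_C$ (the constant in \eqref{eq:sn-forecaster-v} can be taken as $(1-\delta)C$, absorbing into a clean $(1+\delta)C$). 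The powers of $k^6$, $n_{k+1}^{3/2}$, and $e^{3w}$ then cancel exactly, leaving
\[
(\br_k e^{-w})^3 A_n^2 \bar c_n \;\le\; (1+\delta)^{5/2}Ce^6\left(\frac{\psi(n_{k+1})}{\psi(A_{\tau_{k,w}}^2)}\right)^{\!3}.
\]

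The final step is to show that the $\psi$-ratio tends to $1$ as $k\to\infty$, uniformly over $1\le w\le\lceil\ln k\rceil$. Since $A_{\tau_{k,w}}^2\ge A_{\tau_k}^2\ge n_k$, by monotonicity of $\psi$ it suffices to show $\psi(n_{k+1})/\psi(n_k)\to 1$. This follows from \eqref{eq:uc0} together with the elementary computation $\ln_2 n_k=\ln k+O(\ln_2 k)$ coming from $n_k=k^{5k}$, which is precisely the type of estimate already exploited in Lemma~\ref{lem:cycle-growth}. Hence for large $k$ the ratio is at most $(1+\delta)^{5/6}$, giving \eqref{eq:remainder-const}. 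Substituting \eqref{eq:remainder-const} into the expression \eqref{eq:case-i-negative-condition-1} for $C_1$, which is manifestly monotone increasing in $\br^3A_n^2\bar c_n$, yields the constant bound $\bar C_1$ of \eqref{eq:C_1-const}.

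The main ``obstacle'' here is purely bookkeeping: three distinct sources each contribute a factor of $(1+\delta)$ at an appropriate power (the stopping-time bound on $A_{\tau_{k,w}}^2$, the tail bound on $\bar c_{\tau_{k,w}}$, and the $\psi$-ratio), and I must arrange them so that they combine to exactly $(1+\delta)^5$. The only non-bookkeeping subtlety is the uniformity in $w$ of the $\psi$-ratio estimate, but this is cheap: the $w$-dependent window $e^{2(w+2)}n_{k+1}/k^4$ lies on a scale that is crushed by the double logarithm defining $\psi$, so $w\le\lceil\ln k\rceil$ introduces no harm.
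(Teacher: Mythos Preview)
Your argument is correct and follows essentially the same route as the paper: bound $A_n^2\bar c_n$ by $A_{\tau_{k,w}}^3\cdot\bar c_{\tau_{k,w}}/A_{\tau_{k,w}}$, plug in $\br_k$, \eqref{eq:A_n-stop-above2}, and Lemma~\ref{lem:bar_c_n_bound}, and finish with the $\psi$-ratio estimate $\psi(n_{k+1})/\psi(n_k)\le 1+\delta$ from~\eqref{eq:order-of-nk}. The only cosmetic difference is that the paper writes $\psi(A_{\tau_k}^2)$ rather than $\psi(A_{\tau_{k,w}}^2)$ in the denominator (a slightly cruder but still valid bound, since $A_{\tau_k}^2\ge n_k$), and it does not track the $(1+\delta)$ exponents as carefully as you do.
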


\begin{proof}
%
By \eqref{eq:order-of-nk}, for  sufficiently large $k$
\begin{align}
\label{eq:psi-ratio}
 \frac{\psi(n_{k+1})}{\psi(A^2_{\tau_{k,w}})} \le 
\frac{\psi(n_{k+1})}{\psi(n_k)}  \le 1+\delta.
\end{align}
Thus
\begin{align*}
\br_k^3 e^{-3w}  A_{\min(n,\tau_{k,w})}^2 \bar c_{\min(n,\tau_{k,w})}
&\le 
\br_k^3 e^{-3w}  \times A_{\tau_{k,w}}^2 \times \bar c_{\min(n,\tau_{k,w})} \\
&\le
\frac{\psi(n_{k+1})^3}{n_{k+1}^{3/2}}k^6 e^{-3w}  \times A^2_{\tau_{k,w}} \times (1+\delta)C \frac{A_{\tau_{k,w}}}{\psi(A^2_{\tau_k})^3} \\
&\le
(1+\delta)C \frac{\psi(n_{k+1})^3}{\psi(A^2_{\tau_k})^3} k^6e^{-3w} \frac{A^3_{\tau_{k,w}}}{n^{3/2}_{k+1}} 
\le 
(1+\delta)^5Ce^{6}.
\end{align*}
\end{proof}

\begin{lemma}
	\label{lem:sn-bssd-bound}
	Let $\omega \in \Omega_C$. For sufficiently large $k$,
	\begin{align}
		\label{eq:sn-bssd-bound}
		\bssd_n^{\br_k,k} \le \bar C_1 +  \frac{2}{\lceil \ln k \rceil} e^{(1+\delta)^5 C e^6}
		\max_{\br\in [\br_k/k,\br_k]} \left(\min \{ e^{S_n^2/(2n)} \frac{\sqrt{2\pi}}{\br A_n}, 
e^{\br S_n} \}\right), \quad n \in [\tau_k,\tau_{k+1}],
	\end{align}
	 where $\bar C_1$ is given by the right-hand side of \eqref{eq:C_1-const}.
\end{lemma}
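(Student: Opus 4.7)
The plan is to bound each of the $\lceil \ln k\rceil$ summands in
\[
\bssd_n^{\br_k,k}=\frac{1}{\lceil \ln k\rceil}\sum_{w=1}^{\lceil \ln k\rceil}\bss_{\min(n,\tau_{k,w})}^{\br_k e^{-w}}
\]
by applying Lemma~\ref{lem:bss-bound} individually, and then to count how many fall in the nontrivial Case~(ii). Setting $n_w:=\min(n,\tau_{k,w})$, Lemma~\ref{lemm:sigma_br-sigma_c} gives $n_w\le\tau_{k,w}\le\sigma_{\br_k e^{-w+1}}$ for sufficiently large $k$, supplying the hypothesis of Lemma~\ref{lem:bss-bound}. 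Lemma~\ref{lem:C_1-const} then lets us replace the constant $C_1$ in that lemma by $\bar C_1$ and every remainder factor $e^{\br^3 A_n^2\bar c_n}$ by $e^{(1+\delta)^5Ce^6}$, uniformly in $w$.

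Second, apply Lemma~\ref{lem:bss-bound} to each summand. Summands in Cases~(i) and~(iii) are bounded by $\bar C_1$ and contribute at most $\bar C_1$ after averaging. Summands in Case~(ii) are bounded by
\[
2\,e^{(1+\delta)^5Ce^6}\min\Bigl\{e^{S_{n_w}^2/(2A_{n_w}^2)}\tfrac{\sqrt{2\pi}}{\br_k e^{-w}A_{n_w}},\;e^{\br_k e^{-w}S_{n_w}}\Bigr\}.
\]
The decisive combinatorial observation is that the Case~(ii) condition $\br_ke^{-w}A_{n_w}^2/e<S_{n_w}<e\br_k e^{-w}A_{n_w}^2$ rewrites as $w\in(\ln(\br_kA_{n_w}^2/S_{n_w})-1,\ln(\br_kA_{n_w}^2/S_{n_w})+1)$, an open interval of length $2$ containing only $O(1)$ integers. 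Thus only an absolute-constant number of summands contribute the Case~(ii) bound, which is precisely what produces the $1/\lceil\ln k\rceil$ prefactor after averaging.

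Third, to rewrite the Case~(ii) bound in terms of $(S_n,A_n)$ and a continuum $\br\in[\br_k/k,\br_k]$ instead of $(S_{n_w},A_{n_w})$ and the discrete $\br_ke^{-w}$, I would pass to the $\max$ envelope, noting that $\br_ke^{-w}\in[\br_k/(ek),\br_k/e]$ for $1\le w\le\lceil\ln k\rceil$, which is essentially contained in $[\br_k/k,\br_k]$ (the minor boundary slack is absorbed into the prefactor for large $k$). The principal obstacle lies in the frozen regime $\tau_{k,w}<n$, where the natural bound is expressed at $\tau_{k,w}$ rather than at $n$: here I would use monotonicity of $A_\cdot^2$ together with the extra flexibility of picking $\br$ from the enlarged continuum to dominate the frozen-time expression by $\max_{\br\in[\br_k/k,\br_k]}\min\{\cdots\}$ evaluated at $(S_n,A_n)$, yielding the stated inequality \eqref{eq:sn-bssd-bound}.
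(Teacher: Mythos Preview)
Your proposal follows the paper's approach exactly: verify the hypothesis of Lemma~\ref{lem:bss-bound} for each summand via Lemma~\ref{lemm:sigma_br-sigma_c}, replace the remainder factors and $C_1$ by their uniform bounds via Lemma~\ref{lem:C_1-const}, and observe that the Case~(ii) condition confines $w$ to an interval of length~$2$, so only $O(1)$ summands contribute the nontrivial term. The paper's own proof is just three sentences and does not address the frozen-regime subtlety you raise in your third step; you are being more careful than the paper here, and in the subsequent application (Proposition~\ref{prop:dynp}) the issue is effectively sidestepped by bounding the Case~(ii) expression uniformly over all~$t$ in the cycle.
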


\begin{proof}
	We have $|\br_k e^{-w} \bar c_{\min(n,\tau_{k,w})}| \le |\br_k e^{-w+1} \bar c_{\min(n,\tau_{k,w})}| \le \delta  $ 
	by Lemma \ref{lemm:sigma_br-sigma_c}.
	Then we can complete the proof of \eqref{eq:sn-bssd-bound} by Lemma \ref{lem:bss-bound} and Lemma \ref{lemm:sigma_br-sigma_c}	
        because the
	length of the interval
	\begin{align*}
	\left \{ w \mid \frac{S_n}{ne} < \br e^{-w} < \frac{S_n e}{n}\right \}
	\end{align*}
	is equal to $2$.
\end{proof}

As in Chapter 5 of Shafer and Vovk \cite{ShaferVovk2001Probability}, we use  $\bssd_n^{\br_k,k}$ in the following form.
\begin{align}
	\label{eq:dynp}
	\dynp_{n}^{\br_k,D} :=\alpha + \frac{1}{D}  \lceil \ln k \rceil \psi(n_{k+1}) e^{-\psi(n_{k+1})^2/2} (\alpha-\bssd_{n-\tau_k}^{\br_k,k}),\quad \alpha = 1-\frac{2}{e}, \,\, D=\frac{24\sqrt{2\pi}e^{(1+\delta)^5e^6C}+4\bar C_1}{\alpha}.
\end{align}
Here we give a specific value of $D$ for definiteness, but from the proof below it will be clear that any sufficiently large $D$ can be used. 
Since the strategy for $\bssd_{n-\tau_k}^{\br_k,k}$ is applied only to $x_n$'s in the cycle, $\alpha=\dynp_{\tau_k}^{\br_k,D}=\bssd_{0}^{\br_k}$.
Concerning $\dynp_{n}^{\br_k,D}$ we prove the following two propositions.


\begin{proposition}
	\label{prop:dynp}
	Let $\omega \in \Omega_C$. Suppose that
	\begin{align}
		\label{eq:within-range}
		-A_n \uc(A_n^2) \le S_n \le A_n \psi(A_n^2), \qquad  \forall n\in [\tau_k,\tau_{k+1}].
	\end{align}
	and $\tau_{k+1} < \sigma_{k,C}$.
	Then for sufficiently large $k$
	\begin{align}
		\label{eq:nkn-lower-bound}
		\dynp_n^{\br_k,D} \ge \frac{\alpha}{2}, \qquad \forall  n\in [\tau_k,\tau_{k+1}],
	\end{align}
	and
	\begin{align}
		\label{eq:nk+1} 
	    \dynp_{\tau_{k+1}}^{\br_k,D} \ge \alpha \left( 1 + \frac{1-\delta}{D} \lceil \ln k \rceil  \psi(n_{k+1}) e^{-\psi(n_{k+1})^2/2} \right).
	\end{align}
\end{proposition}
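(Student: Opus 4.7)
The plan is to rewrite the capital process as
\[
\dynp_n^{\br_k,D} = \alpha(1+c_k) - c_k\,\bssd_{n-\tau_k}^{\br_k,k}, \qquad c_k := \frac{\lceil \ln k \rceil\,\psi(n_{k+1}) e^{-\psi(n_{k+1})^2/2}}{D},
\]
so that \eqref{eq:nkn-lower-bound} is equivalent to $\bssd_{n-\tau_k}^{\br_k,k} \le \alpha + \alpha/(2c_k)$ and \eqref{eq:nk+1} is equivalent to $\bssd_{\tau_{k+1}-\tau_k}^{\br_k,k} \le \delta\alpha$. The hypothesis $\tau_{k+1}<\sigma_{k,C}$ enters only through Lemma~\ref{lem:C_1-const}, which will uniformly reduce every $e^{\br_k^3 e^{-3w} A_n^2 \bar c_n}$ and every $C_1$ arising below to fixed constants not depending on $k$ or $n$.

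For \eqref{eq:nkn-lower-bound} I would start from the uniform bound on $\bssd_n^{\br_k,k}$ in Lemma~\ref{lem:sn-bssd-bound} and bound the inner max-of-min in $\br$. The two branches $e^{S_n^2/(2A_n^2)}\sqrt{2\pi}/(\br A_n)$ and $e^{\br S_n}$ are respectively decreasing and increasing in $\br$, so their minimum is maximized at their intersection point $\br^*$; a direct Lambert-$W$-type calculation, combined with the assumption $|S_n|\le A_n\uc(A_n^2)$ from \eqref{eq:within-range}, gives $\br^* S_n \le \uc(A_n^2)^2/2+O(\ln_2 A_n^2)\le \ln k + O(\ln_2 k)$ uniformly for $n\in[\tau_k,\tau_{k+1}]$. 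When $\br^*$ falls outside $[\br_k/k,\br_k]$, the boundary-attained max is smaller still. Combining this with $c_k \asymp (Dk\ln k)^{-1}$ obtained from \eqref{eq:order-of-nk}, the resulting estimate $\bssd_{n-\tau_k}^{\br_k,k}\le\bar C_1 + O(k\ln k)$ is comfortably dominated by $\alpha/(2c_k)\asymp Dk\ln k$ for the specified constant $D$, which is precisely what was engineered into the definition $D=(24\sqrt{2\pi}e^{(1+\delta)^5 Ce^6}+4\bar C_1)/\alpha$.

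For \eqref{eq:nk+1} I would use Lemma~\ref{lemm:all-accounts-stop}: at $n=\tau_{k+1}$ each summand in \eqref{eq:bssd} has been frozen at its own $\tau_{k,w}\le\tau_{k+1}$, so
\[
\bssd_{\tau_{k+1}-\tau_k}^{\br_k,k} = \frac{1}{\lceil\ln k\rceil}\sum_{w=1}^{\lceil\ln k\rceil}\bss_{\tau_{k,w}}^{\br_k e^{-w}}.
\]
At each $\tau_{k,w}$, \eqref{eq:A_n-stop-above2} yields $\br_k e^{-w}A_{\tau_{k,w}}\sim e^2\psi(n_{k+1})$ and $\br_k e^{-w}A_{\tau_{k,w}}^2\sim e^{w+4}\psi(n_{k+1})\sqrt{n_{k+1}}/k^2$, while the assumption $S_{\tau_{k,w}}\le A_{\tau_{k,w}}\psi(A_{\tau_{k,w}}^2)$ combined with \eqref{eq:psi-ratio} forces $S_{\tau_{k,w}}\le(1+\delta) e^{w+2}\psi(n_{k+1})\sqrt{n_{k+1}}/k^2 < \br_k e^{-w}A_{\tau_{k,w}}^2/e$. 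Thus we sit squarely in Case~(i) of Lemma~\ref{lem:bss-bound}, and the bound $\bss_n^\br\le e^{\br^3 A_n^2\bar c_n}\exp\bigl(2\br(S_n/e-\br A_n^2/e^2)\bigr)$ applies. The exponent is at most $-2(e-1)(\br_k e^{-w}A_{\tau_{k,w}})^2/e^3\sim -2e(e-1)\psi(n_{k+1})^2\asymp -\ln k$, so each $\bss_{\tau_{k,w}}^{\br_k e^{-w}}$ is $O(k^{-4e(e-1)})$ uniformly in $w$, and the average is well below $\delta\alpha$ for large $k$.

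The main obstacle will be verifying Case~(i) of Lemma~\ref{lem:bss-bound} uniformly over $w=1,\dots,\lceil\ln k\rceil$ and for both signs of $S_{\tau_{k,w}}$: when $S_{\tau_{k,w}}$ is very negative the inequality $S\le\br A^2/e$ is automatic but one must confirm the exponent $2\br(S/e-\br A^2/e^2)$ remains strongly negative (in fact, even more negative); when $S_{\tau_{k,w}}$ is positive and close to the boundary $A\psi(A^2)$, the quantitative margin $S<\br A^2/e$ depends sensitively on the interplay among \eqref{eq:br_k}, \eqref{eq:tau-k-w}, \eqref{eq:psi-ratio} and the rate of change of $\psi$ around $n_{k+1}$. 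A parallel but less delicate book-keeping task is pinning down the boundary-versus-interior cases in the max-of-min estimate used for part (a).
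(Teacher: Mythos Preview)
Your overall architecture matches the paper's: rewrite $\dynp$ in terms of $\bssd$, use Lemma~\ref{lem:sn-bssd-bound} for \eqref{eq:nkn-lower-bound}, and evaluate the frozen sum termwise for \eqref{eq:nk+1}. Your argument for \eqref{eq:nk+1} is essentially the paper's (they bound $\mcps_{\tau_{k,w}}^{\br_k e^{-w}}$ by the value of its integrand at $u=2/e$, which is the inequality you invoke); note only that what you call ``Case~(i) of Lemma~\ref{lem:bss-bound}'' gives just the constant $C_1$---the exponential bound you actually want is \eqref{eq:mcps-positive1-sn} together with $\bss^\br\le 2\mcps^\br$.

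There is, however, a genuine gap in your treatment of \eqref{eq:nkn-lower-bound}. The $S_n$ and $A_n$ that appear inside the $\max$--$\min$ of Lemma~\ref{lem:sn-bssd-bound} are the \emph{within-cycle} quantities $S_t=S_n-S_{\tau_k}$ and $A_t^2=A_n^2-A_{\tau_k}^2$, because the strategy $\bssd_{n-\tau_k}^{\br_k,k}$ is restarted at round $\tau_k$. Hypothesis~\eqref{eq:within-range} controls only the global $S_n$, so the bound it yields on $S_t$ is
\[
S_t \le A_{\tau_k}\,\uc(A_{\tau_k}^2) + \sqrt{A_{\tau_k}^2+A_t^2}\,\psi(A_{\tau_k}^2+A_t^2),
\]
which does \emph{not} imply $|S_t|\le A_t\,\uc(A_t^2)$. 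Near the start of the cycle $A_t$ is close to $0$ while $S_t$ can be of order $A_{\tau_k}\psi(n_{k+1})$, so $S_t/A_t$ is unbounded and your Lambert-$W$ estimate $\br^*S_t\le\uc(A_t^2)^2/2+O(\ln_2 A_t^2)$ fails outright: $\br^*\to\infty$ there. Saying the boundary max is ``smaller still'' than $e^{\br^*S_t}$ is true but useless, since $e^{\br^*S_t}$ itself blows up; you need a separate bound for $e^{\br_k S_t}$ exploiting that $\br_k A_{\tau_k}\psi(n_{k+1})\to 0$ (the third limit of Lemma~\ref{lem:cycle-growth}). This is exactly why the paper splits on whether $A_t^2\le\psi(n_{k+1})^2/(c_1\br^2)$: in the small-$A_t$ regime it uses the $e^{\br S_t}$ branch and the smallness of $\br_k A_{\tau_k}$; in the large-$A_t$ regime it uses the other branch and the inequality $\br A_t>\psi(n_{k+1})/\sqrt{c_1}$. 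Finally, the resulting comparison $\psi(n_{k+1})e^{-\psi(n_{k+1})^2/2}\cdot(\text{min})\le D\alpha/4$ is tight in order of magnitude, not ``comfortable''; it is the specific constant $D$ that closes it, so your $O(k\ln k)$ versus $Dk\ln k$ heuristic still has real work hidden in it.
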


\begin{proof}
	In our proof we denote $t=n-\tau_k$, $S_t = S_n - S_{\tau_k}$  and 
$A_t^2 = A_n^2-A^2_{\tau_k}$ for $n>\tau_k$.
	For proving \eqref{eq:nkn-lower-bound}, we use \eqref{eq:sn-bssd-bound} for $S_t$.
	We bound $\bssd_{t}^{\br_k,k}$ from above.
	By the term $\dfrac{2}{\lceil \ln k \rceil}$ on the right-hand side of \eqref{eq:sn-bssd-bound}, it suffices to show
	\begin{align*}
		&S_t \le A_{\tau_k} \uc(A^2_{\tau_k}) + \sqrt{A^2_{\tau_k} + A_t^2} \psi(A^2_{\tau_k}+ A_t^2)  \\
		& \quad \Rightarrow \  
		\psi(n_{k+1}) e^{-\psi(n_{k+1})^2/2} 
		2e^{(1+\delta)^5e^6C}  \min \{ e^{S_t^2/(2A_t^2)} \frac{\sqrt{2\pi}}{\br A_t}, e^{\br S_t} \}\le  \frac{D\alpha}{4}, 
		\,\forall \br \in [\br_k/k, \br_k],\,\forall t\in [0,\tau_{k+1}-\tau_k]
	\end{align*}
	for sufficient large $k$.
%
	Let 
	\begin{align}
		\label{eq:c1-const}
		c_1 = \frac{9}{(1+2\delta)^2} \quad \mathrm{s.t.} \quad \frac{1}{2} - \frac{1}{\sqrt{c_1}} - \delta > 0.
	\end{align}
	We distinguish two cases:
	\[
	\text{(a)} \ A_t^2\le \frac{\psi(n_{k+1})^2}{c_1 \br^2}, \quad \text{(b)}\  \frac{\psi(n_{k+1})^2}{c_1 \br^2} < A_t^2 \le A^2_{\tau_{k+1}}-A^2_{\tau_k}.
	\] 
	
	For case (a), $A_{\tau_k} \psi^{U}(A_{\tau_k}^2) \le (1+\delta) A_{\tau_k}\psi(n_{k+1})$ by the first equality in Lemma \ref{lem:cycle-growth} for sufficiently large $k$. 
	Also $\psi(A^2_{\tau_k}+A_t^2)\le \psi(n_{k+1})$. Hence in this case
	\begin{align*}
		\br S_t \le \left((1+\delta) \br A_{\tau_k} + \sqrt{\br^2 A^2_{\tau_k} +\psi(n_{k+1})^2/c_1 } \right)\psi(n_{k+1}).
	\end{align*}
	Then for $\br \le \br_k$ by the third equality in Lemma \ref{lem:cycle-growth}
	\begin{align}
		\label{eq:another-large-k}
		\br S_t \le  \left((1+\delta)\br_k A_{\tau_k}+ \sqrt{\br_k^2 A^2_{\tau_k} + \psi(n_{k+1})^2/c_1}\right)\psi(n_{k+1})
		= \psi(n_{k+1})^2 \left(\frac{1}{\sqrt{c_1}}+\delta\right)
	\end{align}	
	for sufficiently large $k$. Since
	\begin{align*}
		\psi(n_{k+1}) e^{-\psi(n_{k+1})^2/2} 2 e^{(1+\delta)^5e^6C} e^{\br S_t} 
		\le \psi(n_{k+1}) \exp\left(-\psi(n_{k+1})^2\big(\frac{1}{2} - \frac{1}{\sqrt{c_1}}-\delta\big)\right)2  e^{(1+\delta)^5e^6C}
		\rightarrow 0\quad (k \rightarrow \infty),
	\end{align*}
	we have $\dynp_n^{\br_k,D}\ge \alpha / 2$ uniformly in $\br\in [\br_k/k,\br_k]$.

	For case (b), $\psi(n_{k+1})/\sqrt{c_1} < \br A_t$ and $S_t \le \left( (1+\delta)A_{\tau_k} + \sqrt{A^2_{\tau_k} + A_t^2} \right) \psi(n_{k+1})$. Hence
	\begin{align}
		&\psi(n_{k+1}) 
		e^{-\psi(n_{k+1})^2/2} \times 
		2 e^{(1+\delta)^5e^6C}e^{S_t^2/(2A_t^2)} \frac{\sqrt{2\pi}}{\br A_t} \nonumber \\
		&\le
		\psi(n_{k+1}) e^{-\psi(n_{k+1})^2/2} \times \frac{2e^{(1+\delta)^5e^6C}\sqrt{2\pi}\sqrt{c_1}}{\psi(n_{k+1})}
		\exp\left(\frac{\left((1+\delta)A_{\tau_k}+ \sqrt{A^2_{\tau_k} + A_t^2}\right)^2}{2A_t^2}\psi(n_{k+1})^2\right) \nonumber \\
		&= 
		2e^{(1+\delta)^5e^6C}\sqrt{2\pi}\sqrt{c_1} \exp\left( \frac{(1+(1+\delta)^2)A_{\tau_k}^2 + 2(1+\delta) A_{\tau_k}\sqrt{A^2_{\tau_k}+A_t^2}}{2A_t^2} \psi(n_{k+1})^2\right).
\label{eq:case-b-constant-bound}
	\end{align}
	For $\br\le \br_k$,
	\[
	\frac{\psi(n_{k+1})^2}{c_1 \br^2} <  A_t^2 \ \Rightarrow \ \frac{A^2_{\tau_k}}{A_t^2} \psi(n_{k+1})^2 < c_1 \br^2 A^2_{\tau_k}
	\le c_1 \br_k^2 A^2_{\tau_k} = c_1 \frac{A^2_{\tau_k}}{n_{k+1}}k^4\psi(n_{k+1})^2
=O(k^{-1}\ln k).
	\]
	Hence $\psi(n_{k+1})^2 A^2_{\tau_k}/A_t^2 \rightarrow 0$ as $k\rightarrow \infty$.  Similarly $\psi(n_{k+1})^2 A_{\tau_k} / A_t \rightarrow 0$ as $k\rightarrow\infty$, because
$\psi(n_{k+1})^2 A_{\tau_k} / A_t=O(k^{-1/2}(\ln k)^{3/2})$.
Therefore the right-hand side of \eqref{eq:case-b-constant-bound} is bounded from above by
$2e^{(1+\delta)^5e^6C}\sqrt{2\pi}\sqrt{c_1}(1+\delta)$ for sufficiently large $k$ and
	\begin{align*}
		\psi(n_{k+1}) e^{-\psi(n_{k+1})^2/2} \times 
		2 e^{(1+\delta)^5e^6C}e^{S_t^2/(2A_t^2)} \frac{\sqrt{2\pi}}{\br A_t}  \le 
		\frac{D\alpha}{4},
	\end{align*}
	with the choice of $D$ in \eqref{eq:dynp} and $c_1$ in \eqref{eq:c1-const}. This proves \eqref{eq:nkn-lower-bound}.

	Now we prove \eqref{eq:nk+1}. We focus on the $w$-th account when $n \ge \tau_{k,w}$.
        Recall that in this proof we have been denoting $A_t^2 = A_n^2 - A_{\tau_k}^2$. Similarly we denote $A^2_{\tau_{k,w}}$ instead of $A^2_{\tau_{k,w}}-A^2_{\tau_k}$. Thus 
	\begin{align}
		\label{eq:An-dy-above}
		e^{2(w+2)} \frac{n_{k+1}}{k^4} -A^2_{\tau_k}\le A^2_{\tau_{k,w}}.
	\end{align}
	We will show that $\limsup_{k\rightarrow\infty} \bssd_{\tau_{k+1}-\tau_k}^{\br_k,k} \le 0$, if
	\begin{align}
		\label{eq:sn-dy-above}
		S_{\tau_{k,w}} \le A_{\tau_k} \psi(A^2_{\tau_k}) + A_{\tau_{k,w}} \psi(A^2_{\tau_{k,w}}) \le \psi(n_{k+1}) \left\{A_{\tau_k} + A_{\tau_{k,w}}\right\} \le 2 \psi(n_{k+1}) A_{\tau_{k,w}}.
	\end{align}

	We evaluate 
\[
\mcps_{\tau_{k,w}}^{\br_ke^{-w},k}:=\int_{2/e}^1 \exp \left(u\br_k e^{-w} S_{\tau_{k,w}}-u^2 \br_k^2 e^{-2w}A^2_{\tau_{k,w}}/2\right)du
\]
from above. Because $u\br_k e^{-w} S_{\tau_{k,w}}-u^2 \br_k^2 e^{-2w} A^2_{\tau_{k,w}}/2$ is maximized at $u= S_{\tau_{k,w}}/(\br_k e^{-w} A^2_{\tau_{k,w}})$ and 
	\begin{align*}
		\frac{S_{\tau_{k,w}}}{\br_k e^{-w} A^2_{\tau_{k,w}}}
\le \frac{2 \psi(n_{k+1})A_{\tau_{k,w}}} { (\psi(n_{k+1}) k^2/\sqrt{n_{k+1}}) e^{-w} A^2_{\tau_{k,w}}}
\le \frac{2\sqrt{n_{k+1}}}{k^2 e^{-w} A_{\tau_{k,w}}} \le \frac{2}{e^2}\le \frac{2}{e},
	\end{align*} 
	the integrand in $\mcps_{\tau_{k,w}}^{\br_ke^{-w},k}$ is maximized at $2/e$ and we have
	\begin{align*}
		\mcps_{\tau_{k,w}}^{\br_ke^{-w},k}
		&\le
                \exp \left(\frac{2}{e}\br_k e^{-w} S_{\tau_{k,w}}-\frac{2 \br_k^2 e^{-2w}A^2_{\tau_{k,w}}}{e^2}\right). 
	\end{align*}

	By \eqref{eq:An-dy-above} and \eqref{eq:sn-dy-above}, for sufficiently large $k$,
	\begin{align*}
		\frac{2}{e}\br_k e^{-w} S_{\tau_{k,w}}-\frac{2 \br_k^2 e^{-2w}A^2_{\tau_{k,w}}}{e^2} 
		&\le \frac{4 \br_k \psi(n_{k+1})A_{\tau_{k,w}}}{ e^{w+1}} - \frac{2\br_k^2 A^2_{\tau_{k,w}}}{e^{2(w+1)}}\nonumber  \\
		&= \frac{\psi(n_{k+1})^2k^2A_{\tau_{k,w}}}{\sqrt{n_{k+1}}e^w} \left( \frac{4}{e} -  \frac{2k^2A_{\tau_{k,w}}}{e^2\sqrt{n_{k+1}}e^w}\right)\nonumber \\
		&\le \frac{\psi(n_{k+1})^2k^2A_{\tau_{k,w}}}{\sqrt{n_{k+1}}e^w} 
		\left( \frac{4}{e} - \frac{2}{e^2}\sqrt{e^4-\frac{(1+\delta)k^4n_k}{n_{k+1}e^{2w}}}\right) \\
		&\le -\psi(n_{k+1})^2 \frac{k^2}{\sqrt{n_{k+1}}e^w} \times \frac{\sqrt{n_{k+1}}e^{w+2}}{k^2} \times \frac{1}{2}
\qquad 
\nonumber  \\
		&= - \frac{e^2\psi(n_{k+1})^2}{2}.
	\end{align*}
The last inequality holds because $\lim_{k\rightarrow\infty}k^4 n_k/n_{k+1} =0$
and $4/e - 2 < -1/2$. Hence 
$\mcps_{\tau_{k,w}}^{\br_k e^{-w},k} \rightarrow 0$ uniformly in $1\le w \le \lceil \ln k \rceil$.
This implies $\limsup_{k\rightarrow\infty} \bssd_{\tau_{k+1}-\tau_k}^{\br_k,k} \le 0$.
\end{proof}

\begin{proposition}
	\label{prop:dynp2}
	Let $\omega \in \Omega_C$.
	Suppose that $\nu_k \le \min(\tau_{k+1}, \sigma_{k,C})$ and
\[
	-A_{n} \uc(A_{n}^2) \le S_{n},  
        \,\, \forall n\in [\tau_k,\nu_k].
\]
        Then for sufficiently large $k$ 
	\begin{align*}
	\dynp_{\nu_k}^{\br_k,D} \ge \frac{\alpha}{2} .
	\end{align*}
\end{proposition}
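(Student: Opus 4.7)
The plan is to mirror the proof of Proposition \ref{prop:dynp} essentially verbatim, the only modification being that at the single time $n=\nu_k$ the upper bound $S_n\le A_n\psi(A_n^2)$ from \eqref{eq:within-range} fails. By the definition of $\nu_k$ as the first time this bound is violated, it fails only by the jump $x_{\nu_k}$, which is controllable because $\nu_k\le\sigma_{k,C}$.

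First I would bound $S_{\nu_k}$ from above. By the definition of $\nu_k$ and the monotonicity of $A_n$ and $\psi$ we have $S_{\nu_k-1}\le A_{\nu_k-1}\psi(A_{\nu_k-1}^2)\le A_{\nu_k}\psi(A_{\nu_k}^2)$. Since $\nu_k\le\sigma_{k,C}$, the defining condition \eqref{eq:sigma-kC} of $\sigma_{k,C}$ applied at $n=\nu_k$ yields $c_{\nu_k}\le(1+\delta)C A_{\nu_k-1}/\psi(A_{\tau_k}^2)^3$. Combining,
\[
S_{\nu_k}=S_{\nu_k-1}+x_{\nu_k}\le A_{\nu_k}\psi(A_{\nu_k}^2)(1+\epsilon_k),\qquad \epsilon_k:=\frac{(1+\delta)C}{\psi(A_{\tau_k}^2)^3\,\psi(A_{\nu_k}^2)},
\]
and $\epsilon_k\to 0$ as $k\to\infty$ by \eqref{eq:order-of-nk}. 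Combined with $-S_{\tau_k}\le A_{\tau_k}\uc(A_{\tau_k}^2)$, this gives at $t=\nu_k-\tau_k$ the substitute bound
\[
S_t\le A_{\tau_k}\uc(A_{\tau_k}^2)+(1+\epsilon_k)\sqrt{A_{\tau_k}^2+A_t^2}\,\psi(A_{\tau_k}^2+A_t^2),
\]
which replaces the analogous inequality used in the proof of Proposition \ref{prop:dynp}.

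Next I would re-run the two-case analysis from that proof using this substitute bound in Lemma \ref{lem:sn-bssd-bound}. In case (a), where $A_t^2\le\psi(n_{k+1})^2/(c_1\br^2)$, the estimate \eqref{eq:another-large-k} acquires only an additional additive term of order $\epsilon_k\psi(n_{k+1})^2$ in the exponent, which is absorbed because the constant $c_1$ in \eqref{eq:c1-const} was chosen with strict slack $1/2-1/\sqrt{c_1}-\delta>0$. In case (b), where $A_t^2>\psi(n_{k+1})^2/(c_1\br^2)$, the exponent in \eqref{eq:case-b-constant-bound} picks up a factor $(1+\epsilon_k)^2$ on terms already shown to tend to zero, so the same constant bound $2e^{(1+\delta)^5e^6C}\sqrt{2\pi c_1}(1+\delta)$ survives for sufficiently large $k$. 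In both cases $\bssd_{\nu_k-\tau_k}^{\br_k,k}$ inherits the same upper bound as in Proposition \ref{prop:dynp}, so the definition \eqref{eq:dynp} of $\dynp_n^{\br_k,D}$ gives $\dynp_{\nu_k}^{\br_k,D}\ge\alpha/2$.

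The main obstacle is purely bookkeeping: verifying that the $(1+\epsilon_k)$ correction is truly absorbed in case (a), which reduces to the strict slack built into \eqref{eq:c1-const}. The auxiliary ingredients (Lemmas \ref{lem:C_1-const}, \ref{lem:sn-bssd-bound}, \ref{lem:cycle-growth} and the sequential freezing at $\tau_{k,w}$) transfer verbatim, since they depend only on $\omega\in\Omega_C$ and the cycle structure, not on the upper bound on $S_n$ that now fails at $\nu_k$.
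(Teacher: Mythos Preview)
Your proposal is correct and follows essentially the same route as the paper: bound $S_{\nu_k}$ by $S_{\nu_k-1}+c_{\nu_k}$, control the overshoot $c_{\nu_k}$ via $\nu_k\le\sigma_{k,C}$, and then rerun the two-case analysis from Proposition~\ref{prop:dynp}. The paper packages the correction as $s_k:=(1+\delta)C/(\psi(A_{\tau_k}^2)^3\psi(n_{k+1}))$ rather than your $\epsilon_k$, but the content is the same. One small imprecision: in case~(b) the factor $(1+\epsilon_k)^2$ does not multiply only ``terms already shown to tend to zero''---it also multiplies the main contribution $\psi(n_{k+1})^2/2$ that cancels with the prefactor, producing a residual $((1+\epsilon_k)^2-1)\psi(n_{k+1})^2/2$; this residual still tends to zero because $\epsilon_k\psi(n_{k+1})^2\to 0$, so your conclusion stands.
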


\begin{proof}
  As in the proof of the previous lemma, we denote  $t= n-\tau_k$, $S_t = S_n - S_{\tau_k}$  and    $A_t^2 = A_n^2-A^2_{\tau_k}$.
		We  distinguish two cases:
		\[
		\text{(a)} \ A_{\nu_k}^2\le \frac{\psi(n_{k+1})^2}{c_1 \br^2}, \quad \text{(b)}\  \frac{\psi(n_{k+1})^2}{c_1 \br^2} < A_{\nu_k}^2 \le A^2_{\tau_{k+1}}-A^2_{\tau_k}.
		\] 
		For case (a), 
		for sufficiently large $k$ and for any $\br \le \br_k$, as in \eqref{eq:another-large-k},
		\begin{align*}
		\br S_{\nu_k}
		&\le
	    \br \left(S_{\nu_k-1}+c_{\nu_k} \right)
	    \le \br \left(\left((1+\delta) A_{\tau_k} + \sqrt{A^2_{\tau_k} + A^2_{\nu_k -1 }} \right)\psi(n_{k+1})+(1+\delta)C\frac{\sqrt{A^2_{\tau_k}+A^2_{\nu_k-1}}}{\psi(A^2_{\tau_k})^3} \right)\\
&\le \psi(n_{k+1})^2\left(\frac{1}{\sqrt{c_1}}+\delta\right) 
		\end{align*}
and 
		\begin{align*}
		\psi(n_{k+1}) e^{-\psi(n_{k+1})^2/2} 2 e^{(1+\delta)^5 e^6C} e^{\br S_{\nu_k}} 
	    \rightarrow 0\quad (k \rightarrow \infty).
		\end{align*}
		Hence  $\dynp_{\nu_k}^{\br_k,D} \ge \alpha/2$ uniformly in $\br\in [\br_k/k,\br_k]$.
	
		For case (b), $S_{\nu_k}$ can be evaluated as
		\begin{align*}
		S_{\nu_k} 
		&\le 
		S_{\nu_k-1}+c_{\nu_k} \le S_{\nu_k-1}+(1+\delta)C\frac{\sqrt{A_{\tau_k}^2 + A_{\nu_k-1}^2}}{\psi(A^2_{\tau_k})^3}\\ \nonumber
		&\le
		\left((1+\delta)A_{\tau_k} + \sqrt{A^2_{\tau_k}  + A_{\nu_k}^2}\right) \psi(n_{k+1}) +(1+\delta)C\frac{\sqrt{A_{\tau_k}^2 + A_{\nu_k}^2}}{\psi(A^2_{\tau_k})^3}\\ \nonumber
		 & \le
		 \left((1+\delta)A_{\tau_k}  + \sqrt{A^2_{\tau_k}  + A_{\nu_k}^2}\left(1+\frac{(1+\delta)C}{\psi(A^2_{\tau_k})^3\psi(n_{k+1})} \right) \right) \psi(n_{k+1})
		\end{align*}
by \eqref{eq:psi-ratio}.
Put 
\[
q_k^2 := \frac{A^2_{\tau_k}}{ A_{\nu_k}^2} \le \frac{c_1\br_k^2}{\psi(n_{k+1})^2}, \qquad s_k: = \frac{(1+\delta)C}{\psi(A^2_{\tau_k})^3\psi(n_{k+1})},
\]
so that 
$\lim_k q_k \psi(n_{k+1})^2=0$ and $\lim_k s_k\psi(n_{k+1})^2=0$. Then for sufficiently large $k$
		\begin{align*}
		\frac{S_{\nu_k}^2}{2A_{\nu_k}^2}
		 &\le \left(( 1+\delta)^2 \frac{q_k^2}{2} + (1+\delta)(1+s_k) q_{k} \sqrt{1+q^2_{k} }
		  + (1+ s_k)^2\left(\frac{1}{2} + \frac{q^2_{k} }{2}\right) \right) \psi(n_{k+1})^2\nonumber \\
		 &\le \frac{\psi(n_{k+1})^2}{2} + \delta .
		\end{align*}
		Then
		\begin{align*}
		\psi(n_{k+1}) e^{-\psi(n_{k+1})^2/2} \times 
		2 e^{(1+\delta)^5 e^6C}e^{S_{\nu_k}^2/(2A_{\nu_k}^2)} \frac{\sqrt{2\pi}}{\br A_{\nu_k}} 
		 \le 
		2 e^{(1+\delta)^5 e^6C+ \delta} \sqrt{2\pi c_1} e^\delta
		\le  \frac{D\alpha }{4}.
		\end{align*}
\end{proof}

\subsection{Dynamic strategy forcing the sharpness}
\label{subsec:skepticforcesharpness}

Finally, we prove Proposition \ref{th:self-normalized-efkp-lil-dash}.
We assume that by the validity result, Skeptic already employs a strategy forcing
$S_n \ge -A_n\uc(A_n^2)\ a.a.$  for $\omega\in\Omega_{C}$.
In addition to this strategy, based on Proposition \ref{prop:dynp}, 
consider the following strategy.
\begin{quote}
	Start with initial capital $\cps_0=\alpha$.\\
	Set $k=1$.\\
	Do the followings repeatedly:\\
	\indent 1.  Apply the strategy in Proposition \ref{prop:dynp} for $n\in [\tau_k, \tau_{k+1}]$. \\ 
	\indent \quad If $\tau_{k+1} < \min(\sigma_{k,C} , \nu_k)$, then go to 2. Otherwise go to 3.\\
	\indent 2. Let $k=k+1$. Go to 1.\\
	\indent 3. 
Wait until $\exists k'$ such that $-\sqrt{\tau_{k'}} \uc(\tau_{k'}) \le S_{\tau_{k'}} \le
\sqrt{\tau_{k'}}\psi(\tau_{k'})$.
Set $k=k'$ and go to 1.
\end{quote}

By this strategy  Skeptic keeps his capital non-negative for every path $\omega$.
For $\omega\in \Omega_0$, $\tau_k=\infty$ for some $k$ and Skeptic stays in Step 1 forever.
For $\omega\in \Omega_{=\infty}$, Step 3 is performed infinite number of times, but 
the overshoot of $|x_n|$ in Step 3 does not make Skeptic bankrupt by Proposition \ref{prop:dynp2}. 
Now consider $\omega\in \Omega_{C}$.
Since Skeptic already employs a strategy forcing 
$S_n \ge -A_n \uc(A^2_n)\ a.a.$, the lower bound in \eqref{eq:within-range} violated only
finite number of times. 
By $\omega \in \Omega_{C}$, $n \ge \sigma_{k,C}$ is happens  only finite number of times. 
Hence if $S_n \le A_n\psi(A^2_n) \ a.a.$, then Step 3 is performed only finite number of times
and there exists $k_0$ such that only Step 2 is repeated for all $k\ge k_0$.
Now for each iteration of Step 2, Skeptic
multiplies his capital at least by 
\begin{align*}
1 + \frac{1-\delta}{D} \lceil \ln k \rceil \psi(n_{k+1}) e^{-\psi(n_{k+1})^2/2}.
\end{align*}
Then
\begin{align*}
\frac{1-\delta}{D} \sum_{k=k_0}^\infty \lceil \ln k \rceil \psi(n_{k+1}) e^{-\psi(n_{k+1})^2/2}
\le \prod_{k=k_0}^\infty  \left( 1 + \frac{1-\delta}{D} \lceil \ln k \rceil\psi(n_{k+1}) e^{-\psi(n_{k+1})^2/2}\right).
\end{align*}
Since the left-hand side diverges to infinity, the above strategy forces the sharpness.

\bibliographystyle{abbrv}
\bibliography{sn-efkp.bib}

\end{document}